\newtheorem{theorem}{Theorem}[section]
\newtheorem{lemma}[theorem]{Lemma}
\newtheorem{proposition}[theorem]{Proposition}
\theoremstyle{remark}
\numberwithin{equation}{section}
\newtheorem{remark}[theorem]{Remark}
\newtheorem{example}[theorem]{Example}
\begin{document}

\title[Leading terms of relations for standard modules of  $C_{n}\sp{(1)}$]{Leading terms of relations
for standard modules of affine Lie algebras $C_{n}\sp{(1)}$}
\author{Mirko Primc and Tomislav \v Siki\' c}
\address{Mirko Primc, University of Zagreb, Faculty of Science,  Croatia}
\email{primc@math.hr}
\address{Tomislav \v{S}iki\'{c}, University of Zagreb, Faculty of Electrical Engineering and  Com- \mbox{\hskip 7.8em} puting, Croatia}
\email{tomislav.sikic@fer.hr}

\subjclass[2000]{Primary 17B67; Secondary 17B69, 05A19.\\
\indent Partially supported by Croatian Science Foundation, Project 2634}

\begin{abstract}
In this paper we give a combinatorial parametrization of leading
terms of defining relations for level $k$ standard modules for affine Lie
algebra of type $C_{n}\sp{(1)}$. Using this parametrization we conjecture colored Rogers-Ramanujan type combinatorial identities
for $n\geq 2$ and $k\geq 2$; the identity in the case $n=k=1$ is equivalent to one of Capparelli's identities.
\end{abstract}
\maketitle

\section{Introduction}

Famous Rogers-Ramanujan identities are two analytic identities, for $a=0$ or $1$,
\begin{align}\label{E: analytic RR}
\prod_{m \geq 0}\frac{1}{(1-q^{5m+1+a})(1-q^{5m+4-a})}&= \sum_{m\geq0}\frac{q^{m^2+am}}{(1-q)(1-q^{2})\cdots (1-q^m)}.
\end{align}
If, for $a=0$, we expand both sides in Taylor series, then the  coefficient of $q^m$ obtained from the product side can be interpreted
as a number of partitions of $m$ with parts congruent $\pm 1\textrm{ mod }5$. On the other side, the coefficient of $q^m$ obtained from the sum side
can be interpreted as a number of partitions of $m$ such that a difference between two consecutive parts is at least two.
We can write a partition as
\begin{equation*}
\sum_{j\geq 1}jf_j\qquad\text{or}\qquad 1\sp{f_1}2\sp{f_2}3\sp{f_3}\dots,
\end{equation*}
meaning that the part $j$ appears $f_j$ times in the partition, and $f_j= 0$ for all but finitely many $j$. With this notation the
difference two condition between two consecutive parts can be written as
\begin{equation}\label{E: difference condition 1}
f_{j}+f_{j+1}\leq 1,\qquad j\geq 1,
\end{equation}
and two ways of expressing the coefficient of $q^m$ in Taylor series of (\ref{E: analytic RR}) can be stated as combinatorial Rogers-Ramanujan identity
\begin{equation*}
\#\{m=\sum jf_j\mid j\equiv \pm 1(\textrm{mod }5)\}=\#\{m=\sum jf_j\mid f_{j}+f_{j+1}\leq 1\}.
\end{equation*}
Analytic Rogers-Ramanujan identities have Gordon-Andrews-Bressoud's generalization
(cf. \cite{G}, \cite{A1}, \cite{A2}, \cite{B1}, \cite{B2}).
These identities also have a combinatorial interpretation, but in general it is not so easy to interpret sum sides of
analytic identities as generating functions for a number of partitions satisfying certain difference conditions among parts.

In 1980's Rogers-Ramanujan-type identities appeared in statistical physics and in representation theory of affine Kac-Moody Lie algebras.
This led to two lines of intensive research and numerous generalizations of both analytic and combinatorial identities,
the reader may consult, for example, the papers \cite{AKS}, \cite{BM}, \cite{BMS}, \cite{CLM}, \cite{FJLMM}, \cite{FS}, \cite{F},
\cite{FQ}, \cite{Ge}, \cite{JMS}, \cite{W} and the references therein.

J. Lepowsky and S. Milne discovered in \cite{LM} that the product sides of Gordon-Andrews-Bressoud identities, multiplied with
certain fudge factor $F$, are principally specialized characters of standard modules for affine
Lie algebra $\widehat{\mathfrak{sl}}_2$.  Lepowsky and R. L. Wilson
realized that the factor $F$ is a character of the Fock space for the principal Heisenberg subalgebra of  $\widehat{\mathfrak{sl}}_2$,
and that the sum sides of Gordon-Andrews-Bressoud identities are the principally specialized characters of the vacuum spaces of standard modules
for the action of principal Heisenberg subalgebra.
In a series of papers---see \cite{LW} and the references therein---Lepowsky and Wilson discovered vertex operators in the principal picture
on standard $\widehat{\mathfrak{sl}}_2$-modules and constructed bases of vacuum spaces for the principal Heisenberg subalgebra
parametrized by partitions satisfying certain difference 2 conditions. Very roughly speaking, in the Rogers-Ramanujan case, the vacuum
space $\Omega$ is spanned by monomial vectors of the form
\begin{equation}\label{E: LW argument 1}
Z(-s)\sp{f_s}\dots Z(-2)\sp{f_2}Z(-1)\sp{f_1}v_0,\quad s\geq 0, \ f_j\geq 0,
\end{equation}
where $v_0\in\Omega$ is a highest weight vector and $Z(j)$ are certain $\mathcal Z$-operators. The degree of monomial vector
(\ref{E: LW argument 1}) is
\begin{equation*}
-m=-\sum_{j=1}\sp s jf_j.
\end{equation*}
Since $\mathcal Z$-operators $Z(j)$ on $\Omega$ satisfy certain relations, roughly of the form
\begin{equation}\label{E: LW relations}
\begin{aligned}
&Z(-j)Z(-j)+2\sum_{i>0}Z(-j-i)Z(-j+i)\approx 0,\\
&Z(-j-1)Z(-j)+\sum_{i>0}Z(j-1-i)Z(-j+i)\approx 0
\end{aligned}
\end{equation}
(see  \cite{LW} and  \cite{MP1} for precise formulation), we may replace the {\it leading terms}
\begin{equation}\label{E: LW argument 3}
Z(-j)Z(-j) \ \text{and} \  Z(-j-1)Z(-j)
\end{equation}
of relations (\ref{E: LW relations}) with ``higher terms'' $Z(-j-i)Z(-j+i)$ and $Z(-j-1-i)Z(-j+i)$, $i>0$, and reduce the spanning set
(\ref{E: LW argument 1}) of $\Omega$ to a spanning set
\begin{equation}\label{E: LW argument 4}
Z(-s)\sp{f_s}\dots Z(-2)\sp{f_2}Z(-1)\sp{f_1}v_0,\quad s\geq 0,\  f_{j}+f_{j+1}\leq 1\ \text{for all} \  j\geq 1.
\end{equation}
By invoking the product formula for principally specialized character of $\Omega$ and Rogers-Ramanujan identities, we see that vectors
in the spanning set (\ref{E: LW argument 4}) are in fact a basis of $\Omega$. In such a way Gordon-Andrews-Bressoud identities also
appear for low level representations of different affine Lie algebras or for representations of low rank affine Lie algebras
(see  \cite{BoM}, \cite{M}, \cite{M1}---\cite{M4}, \cite{X}), and S. Capparelli has found new combinatorial identities (see \cite{C} and \cite{A3}).
The analogous construction in the homogeneous picture is obtained in \cite{LP}.

The results of this paper are closely related to a similar construction of combinatorial bases for standard $\widehat{\mathfrak{sl}}_2$-modules
obtained in \cite{MP2} and \cite{MP3} by A. Meurman and the first author,  and independently in \cite{FKLMM}
by   B. Feigin, R. Kedem, S. Loktev, T. Miwa and E. Mukhin. The starting point is a basis of  $\widehat{\mathfrak{sl}}_2$
\begin{equation}\label{E: MP argument 1}
c\quad\text{and}\quad x(j), h(j), y(j),\quad j\in\mathbb Z,
\end{equation}
where $\{x, h, y\}$ is the standard basis of ${\mathfrak{sl}}_2$, and the corresponding Poincar\' e-Birkhoff-Witt monomial spanning set of level $k$ standard $\widehat{\mathfrak{sl}}_2$-module $L(k\Lambda_0)$
\begin{equation}\label{E: MP argument 2}
y(-s)\sp{c_s}\dots y(-2)\sp{c_2}h(-2)\sp{b_2}x(-2)\sp{a_2}y(-1)\sp{c_1}h(-1)\sp{b_1}x(-1)\sp{a_1}v_0,\quad s\geq 0,
\end{equation}
with $a_j, b_j, c_j\geq 0$. The spanning set (\ref{E: MP argument 2})  is analogous to the spanning set  (\ref{E: LW argument 1}).
We have a relation
\begin{equation}\label{E: MP argument 3}
\sum_{j_1+\cdots +j_{k+1}=m}x(j_1)\cdots
x(j_{k+1})=0
\end{equation}
with the leading term
\begin{equation}\label{E: MP argument 4}
 x(-j-1)^b  x(-j)^a
\end{equation}
with $a+b=k+1$ and $(-j-1)b+(-j)a=m$. This is analogous to (\ref{E: LW relations}) and (\ref{E: LW argument 3}), and we can reduce the spanning set
(\ref{E: MP argument 2}) to a smaller spanning set satisfying the difference condition
$$
a_{j+1}+a_{j}\leq k,
$$
but this spanning set is not a basis of  $L(k\Lambda_0)$.
All the relations needed to reduce (\ref{E: MP argument 2}) to a basis of  $L(k\Lambda_0)$ are obtained from (\ref{E: MP argument 3}) by
the adjoint action of ${\mathfrak{sl}}_2$, and all the leading terms are obtained by the adjoint action of ${\mathfrak{sl}}_2$ on  (\ref{E: MP argument 4})
\begin{equation}\label{E: MP argument 5}
\begin{aligned}
&x(-j-1)^b h(-j)^{a_2} x(-j)^{a_1},\quad a_1+a_2 = a , \\
&x(-j-1)^b y(-j)^{a_2} h(-j)^{a_1},\quad  a_1+a_2 = a ,\\
&h(-j-1)^{b_1} x(-j-1)^{b_2} y(-j)^{a},\quad  b_1+b_2 = b , \\
&y(-j-1)^{b_1} h(-j-1)^{b_2} y(-j)^{a},\quad  b_1+b_2 = b
\end{aligned}
\end{equation}
(see  (\ref{E:12.16}) bellow). By using these relations and their leading terms we can reduce a spanning set  (\ref{E: MP argument 2})
to a smaller spanning set of  $L(k\Lambda_0)$ satisfying the difference conditions
\begin{equation}\label{E: MP argument 6}
\begin{aligned}
&a_{j+1}+b_{j}+a_{j}\leq k, \\
&a_{j+1}+c_{j}+b_{j}\leq k, \\
&b_{j+1}+a_{j+1}+c_{j}\leq k, \\
&c_{j+1}+b_{j+1}+c_{j}\leq k. \\
\end{aligned}
\end{equation}
In \cite{FKLMM} and \cite{MP3} it is proved, by different methods, that this spanning set is a basis of  $L(k\Lambda_0)$. This is analogous
to difference conditions for the basis  (\ref{E: LW argument 4}). The degree of monomial vector (\ref{E: MP argument 2})
satisfying the difference conditions  (\ref{E: MP argument 6}) is
\begin{equation*}
-m=-\sum_{j\geq 1}ja_j-\sum_{j\geq 1}jb_j-\sum_{j\geq 1}jc_j,
\end{equation*}
so we are naturally led to interpret monomial basis vectors (\ref{E: MP argument 2}) in terms of colored partitions with parts $j$ in three colors:
$x$, $h$ and $y$ (cf. \cite{A}, \cite{AA}, \cite{JMS}).

In this paper we give a combinatorial description of (some) leading terms of relations for level $k\geq 1$ standard
modules $L(k\Lambda_0)$ of affine Lie algebras $C_{n}\sp{(1)}$ for all $n\geq 2$. For the set of indices
$\{1,2,\cdots ,n,\underline{n},\cdots ,\underline{2},\underline{1}\}$ we parametrize a basis of the Lie algebra of type $C_n$ as
\begin{equation*}
B=\{X_{ab}\mid b\in\{1,2,\cdots ,n,\underline{n},\cdots ,\underline{2},\underline{1}\},\ a\in\{1,\cdots ,b\}\} .
\end{equation*}
We visualize $B$ as a triangle---for $n=3$ we have
$$\begin{array}{cccccc}
11 &  &&  & & \\
12 & 22 & & & & \\
13 & 23 & 33 & & & \\
1\underline{3} & 2\underline{3} & 3\underline{3} & \underline{3}\underline{3}  & & \\
1\underline{2} & 2\underline{2} & 3\underline{2} & \underline{3}\underline{2}  & \underline{2}\underline{2}& \\
1\underline{1} & 2\underline{1} & 3\underline{1} & \underline{3}\underline{1}  & \underline{2}\underline{1} & \underline{1}\underline{1}.
\end{array}
$$
For any point $rr$ on the diagonal we observe two triangles with this point in common. For example, for $r=\underline{3}$ we have triangles
$$\begin{array}{cccccc}
11 &  &&  & & \\
12 & 22 & & & & \\
13 & 23 & 33 & & & \\
1\underline{3} & 2\underline{3} & 3\underline{3} & \underline{3}\underline{3}  & & \\
&& & \underline{3}\underline{2}  & \underline{2}\underline{2}& \\
&&& \underline{3}\underline{1}  & \underline{2}\underline{1} & \underline{1}\underline{1}.
\end{array}
$$
In each of these triangles we observe a cascade---a ``staircase'' going downwards from the right to the left with a given multiplicity at
each point. For example, we have two cascades, $\mathcal B$ and $\mathcal A$,
$$\begin{array}{cccccc}
\cdot &  &&  & & \\
\cdot & \cdot & & & & \\
\cdot & 3 & 2 & & & \\
5 & 0& \cdot & \cdot & & \\
&& & \cdot  &1& \\
&&&1 &2& \cdot
\end{array}
$$
in triangles with the common point $ \underline{3}\underline{3}$. For two such cascades and some $j\in\mathbb Z$  we can write
the leading term of a relation---in our example it is the monomial
$$
X_{33}(-j-1)\sp2X_{23}(-j-1)\sp3 X_{2\underline{3} }(-j-1)\sp0 X_{1\underline{3}}(-j-1)\sp5 \,X_{ \underline{2}\underline{2}}(-j)\sp1
X_{ \underline{2}\underline{1} }(-j)\sp2 X_{ \underline{3}\underline{1} }(-j)\sp1.
$$
This monomial is the leading term of some relation for
$$
k+1=(2+3+0+5)+(1+2+1),
$$
i.e., for standard module $L(13\Lambda_0)$. By using these relations and their leading terms we can reduce a PBW-spanning set
$$
\prod_{ab\in B, \,j>0}X_{ab}(-j)\sp{m_{ab;j}}\,v_0
$$
of $L(k\Lambda_0)$ to a smaller spanning set satisfying difference conditions
\begin{equation}\label{E: Cn argument 1}
\sum_{ab\in\mathcal B} m_{ab;j+1}+\sum_{ab\in\mathcal A} m_{ab;j}\leq k
\end{equation}
for any two cascades as above. 
{\it We conjecture that this spanning set is in fact a basis.}

  If we interpret ${\mathfrak{sl}}_2$ as
type $C_1$ Lie algebra, then our list of leading terms coincides with (\ref{E: MP argument 5}), and our difference conditions
(\ref{E: Cn argument 1}) coincide with difference conditions (\ref{E: MP argument 6}),
so by  \cite{FKLMM} and \cite{MP3} the spanning set is a basis. For level $k=1$ and $C_2\sp{(1)}$ the spanning set is a basis by \cite{S},
and we show in \cite{P\v S} that  the spanning set is a basis for $k=1$ and all $n\geq 2$. These
are the only cases in which our conjecture is proved.
Numerical evidence supports our conjecture in the case $n=k=2$ (see Example 7.1 in the last
section).

It took us quite a while to understand the combinatorial parametrization of leading terms even for the  $B_2\sp{(1)}=C_2\sp{(1)}$ type affine Lie algebra---in level one case I. Siladi\' c enumerated leading terms by using a computer.
G. Trup\v cevi\' c in \cite{T} first encountered the ``combinatorics of cascades'' in his construction of
combinatorial bases of the Feigin-Stoyanovsky type subspaces of standard modules for affine Lie algebras
$\widehat{\mathfrak{sl}}_n$, a very special case of which are the ``admissible configurations'' in \cite{FJLMM} and \cite{P1}. 
In \cite{BPT} combinatorial bases of the Feigin-Stoyanovsky type subspaces of all standard modules for affine Lie algebras of type $C_\ell\sp{(1)}$ were constructed. Since the combinatorial parametrization of the leading terms in  \cite{BPT} formally coincides with the one described
above for $n=2\ell$, we feel that this formal similarity  might also  support our conjecture 
(cf. \cite{P2}).


In the last section we formulate conjectured colored Rogers-Ramanujan type combinatorial identities.

We thank Arne Meurman for many stimulating discussions and help in understanding the combinatorics of leading terms.

\section{Vertex algebras for affine Lie algebras}

Let ${\mathfrak g}$ be a simple complex Lie algebra, $\mathfrak h$
a Cartan subalgebra of ${\mathfrak g}$ and $\langle \ , \ \rangle$
a symmetric invariant bilinear form on ${\mathfrak g}$. Via this
form we identify $\mathfrak h$ and $\mathfrak h^*$ and we assume
that $\langle \theta , \theta \rangle=2$ for the maximal root
$\theta$ (with respect to some fixed basis of the root system).
We fix a root vector $x_\theta$ in $\mathfrak g$. Set
$$
\hat{\mathfrak g} =\coprod_{j\in\mathbb Z}{\mathfrak g}\otimes
t^{j}+\mathbb C c, \qquad \tilde{\mathfrak g}=\hat{\mathfrak
g}+\mathbb C d.
$$
Then $\tilde{\mathfrak g}$ is the associated untwisted affine
Kac-Moody Lie algebra (cf. \cite{K}) with the commutator
$$
[x(i),y(j)]=[x,y](i+j)+i\delta_{i+j,0}\langle x,y\rangle c.
$$
Here, as usual, $x(i)=x\otimes t^{i}$ for $x\in{\mathfrak g}$ and
$i\in\mathbb Z$, $c$ is the canonical central element, and
$[d,x(i)]=ix(i)$.  We identify ${\mathfrak g}$ and
${\mathfrak g}\otimes 1$. Set
$$
\tilde{\mathfrak g}_{<0} =\coprod_{j<0}{\mathfrak g}\otimes
t^{j},\qquad \tilde{\mathfrak g}_{\leq 0} =\coprod_{j\leq
0}{\mathfrak g}\otimes t^{j}+\mathbb C d,\qquad\tilde{\mathfrak
g}_{\geq 0} =\coprod_{j\geq 0}{\mathfrak g}\otimes t^{j}+\mathbb C
d.
$$

For $k\in\mathbb C$ denote by $\mathbb C v_k$ the one-dimensional
$(\tilde{\mathfrak g}_{\geq 0}+\mathbb C c)$-module on which
$\tilde{\mathfrak g}_{\geq 0}$ acts trivially and $c$ as the
multiplication by $k$. The affine Lie algebra $\tilde{\mathfrak
g}$ gives rise to the vertex operator algebra (see \cite{FLM} and
\cite{FHL}, here we use the notation from \cite{MP3})
$$
N(k\Lambda_0)=U(\tilde{\mathfrak
g})\otimes_{U(\tilde{\mathfrak g}_{\geq 0}+\mathbb C c)}\mathbb C
v_k
$$
for level $k\neq -g^\vee$, where $g^\vee$ is the dual Coxeter
number of ${\mathfrak g}$; it is generated by the fields
\begin{equation}\label{E:3.1}
x(z)=\sum_{m\in\mathbb Z}x_mz^{-m-1},\qquad x\in{\mathfrak g},
\end{equation}
where we set $x_m=x(m)$ for $x\in{\mathfrak g}$.
As usual, for $v\in N(k\Lambda_0)$ we denote the associated vertex operator by $Y(v,z)=\sum_{m\in\mathbb Z }v_mz^{-m-1}$,
and the vacuum vector by $\mathbf  1$. By the
state-field correspondence we have
$$
x(z)=Y(x(-1)\mathbf 1 ,z)\quad\text{for}\ x\in{\mathfrak g}.
$$
The $\mathbb Z$-grading is given by $L_0=-d$.
>From now on we fix the level $k\in\mathbb Z_{>0}$.

\section{Annihilating fields of standard modules}

For the fixed positive integer level $k$ the generalized Verma
$\tilde{\mathfrak g}$-module $N(k\Lambda_0)$ is  reducible, and we
denote by $N^1(k\Lambda_0)$ its maximal $\tilde{\mathfrak
g}$-submodule. By \cite[Corollary 10.4]{K} the submodule
$N^1(k\Lambda_0)$ is generated by the singular vector
$x_\theta(-1)^{k+1}\mathbf 1$. Set
$$
R=U(\mathfrak g)x_\theta(-1)^{k+1}\mathbf 1,\qquad \bar R =
\mathbb C\text{-span}\{r_m \mid r \in R, m \in \mathbb Z\}.
$$
Then $R\subset N^1(k\Lambda_0)$ is an irreducible $\mathfrak
g$-module, and $\bar R$ is the corresponding loop
$\tilde{{\mathfrak g}}$-module for the adjoint action.
We have the following theorem (see \cite{DL}, \cite{FZ}, \cite{Li},
\cite{MP3}):
\begin{theorem}\label{T:5.1}
 Let $M$ be a highest weight $\tilde{\mathfrak
g}$-module of level $k$. The following are equivalent:
\begin{enumerate}
\item $M$ is a standard module,
\item $\bar R$ annihilates $M$.
\end{enumerate}
\end{theorem}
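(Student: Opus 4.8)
The plan is to route the equivalence through the vertex-algebra module theory set up above: every level $k$ highest weight $\tilde{\mathfrak g}$-module $M$ is automatically a module for the vertex operator algebra $N(k\Lambda_0)$, so the content of the theorem is that $M$ descends to a module for the simple quotient $L(k\Lambda_0) = N(k\Lambda_0)/N^1(k\Lambda_0)$ exactly when it is standard. The first step I would carry out is a reduction lemma: for such $M$, the subspace $\bar R$ annihilates $M$ if and only if the whole maximal submodule $N^1(k\Lambda_0)$ acts by zero. One direction is immediate from $R\subset N^1(k\Lambda_0)$; the substance is the converse. The bridge between $\bar R$ and the modes of the single singular vector $x_\theta(-1)^{k+1}\mathbf 1$ is supplied by the bracket relation $[x(0),w_m] = (x(0)w)_m$ for $x\in\mathfrak g$: since $\bar R$ consists of the modes of the horizontal orbit $R = U(\mathfrak g)\,x_\theta(-1)^{k+1}\mathbf 1$, annihilation of $M$ by all modes of $x_\theta(-1)^{k+1}\mathbf 1$ propagates to all of $\bar R$, and conversely. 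Finally, because $N^1(k\Lambda_0)$ is generated as a $\tilde{\mathfrak g}$-module by that same singular vector, the commutator and iterate (Borcherds) identities express the modes of every iterate $u_n\bigl(x_\theta(-1)^{k+1}\mathbf 1\bigr)$ through the modes of $x_\theta(-1)^{k+1}\mathbf 1$ and of $u$, so annihilation by the singular vector's modes already forces $N^1(k\Lambda_0)\cdot M = 0$.

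Granting this reduction, I would prove $(1)\Rightarrow(2)$ as follows. If $M$ is standard of level $k$, it is integrable, and on an integrable level $k$ module the normally ordered power $Y\bigl(x_\theta(-1)^{k+1}\mathbf 1,z\bigr) = \;:x_\theta(z)^{k+1}:$ vanishes as an operator-valued series — this is the classical ``$(k+1)$-st power'' relation, each vector generating only a bounded string under the $\mathfrak{sl}_2$-triple of $\theta$ (cf. \cite{DL}, \cite{Li}, \cite{MP3}). Hence all modes of $x_\theta(-1)^{k+1}\mathbf 1$ annihilate $M$, and by the reduction lemma $\bar R$ annihilates $M$.

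For $(2)\Rightarrow(1)$, the reduction lemma turns the hypothesis into $N^1(k\Lambda_0)\cdot M = 0$, so $M$ is a module for the simple vertex operator algebra $L(k\Lambda_0)$; invoking the classification of its modules (\cite{FZ}; see also \cite{DL}, \cite{Li}, \cite{MP3}), which are precisely the level $k$ standard modules, we conclude that $M$ is standard. The integrability underlying this classification can be exhibited directly on the affine subalgebra $\mathfrak{sl}_2$ attached to $\alpha_0 = \delta - \theta$, with $e_0 = x_{-\theta}(1)$, $f_0 = x_\theta(-1)$ and $h_0 = \alpha_0^\vee = c - \theta^\vee$: applied to the highest weight vector $v_\Lambda$, a suitable mode in $\bar R$ produces a relation with leading term $f_0^{k+1}v_\Lambda$, whose vanishing forces $\langle\Lambda,\alpha_0^\vee\rangle$ to be a non-negative integer at most $k$, while the finite dominant-integrality conditions are read off from the horizontal $\mathfrak g$-module structure of $R$; together these say that $\Lambda$ is dominant integral of level $k$.

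The main obstacle is the two-way bridge of the first paragraph between the Lie-algebraic object $\bar R$ and the vertex-algebra ideal $N^1(k\Lambda_0)$: one must show that annihilation by the modes of the \emph{single} singular vector propagates to annihilation by the modes of \emph{every} element of the ideal it generates, rather than merely of some proper $\tilde{\mathfrak g}$-submodule. This is precisely where the vertex-operator calculus of \cite{DL}, \cite{FZ}, \cite{Li}, \cite{MP3} is indispensable, since the Borcherds identities are what guarantee that the modes of $u_n\bigl(x_\theta(-1)^{k+1}\mathbf 1\bigr)$ close up inside the span generated by the modes of $x_\theta(-1)^{k+1}\mathbf 1$.
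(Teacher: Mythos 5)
The paper does not actually prove Theorem \ref{T:5.1}: it is stated as a known result, with the proof deferred to \cite{DL}, \cite{FZ}, \cite{Li} and \cite{MP3}. Your outline reconstructs precisely the route taken in those references: every highest weight level $k$ module is restricted, hence a weak $N(k\Lambda_0)$-module; the commutator and iterate formulas propagate annihilation from the modes of the single singular vector $x_\theta(-1)^{k+1}\mathbf 1$ to all of $\bar R$ and to the whole ideal $N^1(k\Lambda_0)$; direction $(1)\Rightarrow(2)$ is the classical vanishing of $x_\theta(z)^{k+1}$ on integrable level $k$ modules; and direction $(2)\Rightarrow(1)$ goes through the module theory of the simple quotient $L(k\Lambda_0)$. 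The reduction lemma in your first paragraph is correct as stated, and $(1)\Rightarrow(2)$ is sound modulo deferring the classical vanishing theorem to the literature, which is consistent with how the paper itself treats the whole statement.

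Two points in your $(2)\Rightarrow(1)$ need repair, however. First, a classification of \emph{irreducible} $L(k\Lambda_0)$-modules does not by itself apply to $M$, which is highest weight but not assumed irreducible; you must either invoke rationality (complete reducibility of $\mathbb N$-gradable $L(k\Lambda_0)$-modules) together with the observation that a highest weight module has a unique maximal proper submodule and is therefore indecomposable, or argue as in \cite{MP3} that $\bar R M(\Lambda)=M^1(\Lambda)$ for dominant integral $\Lambda$. This is fixable in a line but cannot be skipped. Second, and more seriously, your supplementary claim that ``the finite dominant-integrality conditions are read off from the horizontal $\mathfrak g$-module structure of $R$'' does not work in the simple way suggested, precisely for the algebras $C_n$ of this paper. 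The extremal weight vectors of $R\cong L((k+1)\theta)$ are $x_\alpha(-1)^{k+1}\mathbf 1$ with $\alpha$ running over the \emph{long} roots only (the Weyl orbit of $\theta$), and applying the corresponding modes to $v_\Lambda$ yields $\langle\Lambda,\alpha^\vee\rangle\in\{0,\dots,k\}$ for long $\alpha$; in coordinates $\bar\Lambda=\sum_i\lambda_i\varepsilon_i$ this gives $\lambda_i\in\{0,\dots,k\}$ but says nothing about the short simple roots $\alpha_i=\varepsilon_i-\varepsilon_{i+1}$, i.e.\ it does not force $\lambda_1\geq\dots\geq\lambda_n$. For instance $\bar\Lambda=\varepsilon_2$ passes every long-root test yet is not dominant; note also that for short $\alpha$ the field $x_\alpha(z)^{k+1}$ is \emph{not} an annihilating field, since $x_\alpha(-1)^{k+1}v_{k\Lambda_0}\neq 0$ (the vanishing order along short roots is $2k+1$). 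Establishing dominance along the short simple roots is exactly the nontrivial content of the Zhu-algebra argument of \cite{FZ} (or of the leading-term analysis of \cite{MP3} in the $\mathfrak{sl}_2$ case), so in this direction your sketch ultimately leans on the cited theorems rather than proving them --- which is acceptable only if you present it as such, as the paper does.
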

\noindent
This theorem implies that for a dominant integral weight
${\Lambda}$ of level ${\Lambda}(c) = k$ we have
$$
\bar RM({\Lambda}) = M^1({\Lambda}),
$$
where $M^1({\Lambda})$ denotes the maximal submodule of the Verma
$\tilde{\mathfrak g}$-module $M({\Lambda})$. Furthermore, since
$R$ generates the vertex algebra ideal $N^1(k\Lambda_0)\subset N(k\Lambda_0)$,
the vertex operators $Y(v,z)$, $v\in N^1(k\Lambda_0)$, annihilate all
standard $\tilde{\mathfrak g}$-modules
$$
L({\Lambda})=M({\Lambda})/M^1({\Lambda})
$$
of level $k$. We shall call the elements $r_m\in\bar R$ {\it relations} (for
standard modules), and $Y(v,z)$, $v\in N^1(k\Lambda_0)$, {\it
annihilating fields} (of standard modules). The
field
$$
Y(x_\theta(-1)^{k+1}\mathbf 1,z)=x_\theta(z)^{k+1}
$$
generates all annihilating fields. We also write
$$
Y(x_\theta(-1)^{k+1}\mathbf 1,z)=\sum_{m\in\mathbb Z }r_{(k+1)\theta}(m)z^{-m-k-1}.
$$

\section{Leading terms}

Set $\bar{\mathfrak g}=\hat{\mathfrak g}/\mathbb C c$. The associative algebra $\mathcal U=U(\hat{\mathfrak g})/(c-k)$
inherits {}from $U(\hat{\mathfrak g})$ the filtration $\mathcal
U_\ell$, $\ell\in\mathbb Z_{\geq 0}$; let us denote by $\mathcal
S\cong S(\bar{\mathfrak g})$ the corresponding commutative graded
algebra. Let $B$ be a basis of ${\mathfrak g}$.
We fix the basis $\bar B$ of $\bar{\mathfrak g}$,
$$
 \bar B=\bigcup_{j\in\mathbb Z}
B\otimes t^j.
$$
Let $\preceq$ be a linear order on $\bar B$ such that
$$
i<j\quad\text{implies}\quad x(i)\prec y(j).
$$
The symmetric algebra $\mathcal S$ has a basis $\mathcal P$
consisting of monomials in basis elements $\bar B$. Elements
$\Pi\in\mathcal P$ are finite products of the form
$$
\Pi=\prod_{i=1}^\ell X_i(j_i), \quad X_i(j_i)\in\bar B,
$$
and we shall say that $\Pi$ is a colored partition of degree
$\left|\Pi\right|=\sum_{i=1}^\ell j_i\in \mathbb Z$ and length $\ell
\left(\Pi\right)=\ell$, with parts $X_i(j_i)$ of degree $j_i$ and color
$X_i$. We shall usually assume that parts of $\Pi$ are indexed so
that
\begin{equation}\label{E: colored partition 1}
X_1(j_1)\preceq X_2(j_2)\preceq \dots\preceq X_\ell(j_\ell).
\end{equation}
We associate with a colored partition $\Pi$  its shape
$\text{sh\,}\Pi$, the ``plain'' partition
$$
j_1\leq j_2\leq \dots\leq j_\ell.
$$
The basis element $1\in\mathcal P$ we call the colored partition
of degree 0 and length 0. Note that $\mathcal P\subset\mathcal S$ is a monoid with the unit
element 1, the product of monomials $\Phi$ and $\Psi$ is denoted
by $\Phi\Psi$. For colored partitions $\Phi$, $\Psi$ and
$\Pi=\Phi\Psi$ we shall write $\Phi=\Pi/\Psi$ and
$\Psi\subset\Pi$. We shall say that $\Psi\subset\Pi$ is an
embedding (of $\Psi$ in $\Pi$), notation suggesting that $\Pi$
``contains'' all the parts of $\Psi$.

The set of all colored partitions of
degree $m$ and length $\ell$ is denoted as $\mathcal P^\ell(m)$.
The set of all colored partitions with parts $X_i(j_i)$ of degree
$j_i<0$ is denoted as $\mathcal P_{<0}$.
We shall fix a monomial basis
$$
X(\Pi)=X_1(j_1)X_2(j_2) \dots X_\ell(j_\ell), \quad \Pi\in\mathcal P,
$$
of the enveloping algebra $\mathcal U$ such that (\ref{E: colored partition 1}) holds.
Then, by Poincar\' e-Birkhoff-Witt theorem, we have a basis
\begin{equation}\label{E: colored partition 2}
X(\Pi)\,\mathbf 1, \qquad \Pi\in\mathcal P_{<0},
\end{equation}
of $N(k\Lambda_0)$, and on the quotient $L(k\Lambda_0)$ a PBW spanning set of the form (\ref{E: colored partition 2}).

Clearly $\bar B\subset\mathcal P$, viewed as colored partitions of
length 1. We assume that on $\mathcal P$ we have a linear order
$\preceq$ which extends the order $\preceq$ on $\bar B$. Moreover,
we assume that order $\preceq$ on $\mathcal P$ has the following
properties:
\begin{itemize}
\item  $\ell(\Pi)>\ell(\Phi)$ implies $\Pi\prec\Phi$.
\item  $\ell(\Pi)=\ell(\Phi)$, $|\Pi|<|\Phi|$ implies $\Pi\prec\Phi$.
\item  Let $\ell(\Pi)=\ell(\Phi)$, $|\Pi|=|\Phi|$. Let $\Pi$ be a partition
$b_1(j_1)\preceq b_2(j_2)\preceq \dots\preceq b_\ell(j_\ell)$ and
$\Phi$ a partition $a_1(i_1)\preceq a_2(i_2)\preceq \dots\preceq
a_\ell(i_\ell)$. Then $\Pi\preceq \Phi$ implies $j_\ell\leq
i_\ell$.
\item  Let $\ell \ge 0$, $m \in  \mathbb Z$ and let
$S \subset \mathcal P$ be a nonempty subset such that all $\Pi$ in
$S$ have length $\ell(\Pi) \le \ell$ and degree $\vert \Pi \vert =
m$. Then $S$ has a minimal element.
\item   $\Phi  \preceq  \Psi$ implies
$\Pi\Phi  \preceq \Pi\Psi$.
\item  The relation $\Pi\prec\Phi$ is a well order on $\mathcal P_{< 0}$.
\end{itemize}

\begin{remark}
An order with these properties is used in \cite{MP3}; colored
partitions are compared first by length and degree, and then by
comparing degrees of parts and colors of parts in the reverse
lexicographical order. In this paper we shall use the same order on $\mathcal P$
extending a chosen linear order on $B$.
\end{remark}

\begin{remark}\label{Remark42}
Note that for elements $X_1(j_1), X_2(j_2), \dots, X_\ell(j_\ell)\in\bar B$ and any permutation $\sigma$ we have
\begin{equation}\label{E: colored partition 2.5}
  X_1(j_1)X_2(j_2) \dots X_\ell(j_\ell)- X_{\sigma(1)}(j_{\sigma(1)})X_{\sigma(2)}(j_{\sigma(2)}) \dots
X_{\sigma(\ell)}(j_{\sigma(\ell)})\in\mathcal U_{\ell-1}.
\end{equation}
So if (\ref{E: colored partition 1}) holds and $\Pi= X_1(j_1)\dots X_\ell(j_\ell)$, our first requirement on order $\succeq$ and
(\ref{E: colored partition 2.5}) imply
\begin{equation}\label{E: colored partition 3}
X_{\sigma(1)}(j_{\sigma(1)}) \dots X_{\sigma(\ell)}(j_{\sigma(\ell)})=X(\Pi)+\sum_{\Phi\succ\Pi}c_\Phi X(\Phi)
\end{equation}
for  some coefficients $c_\Phi$ for $\Phi\in\mathcal P$.
Since we are mostly interested in leading terms of relations, due
to (\ref{E: colored partition 3}) we shall often make no distinction between $\Pi\in\mathcal S$ and $X(\Pi)\in\mathcal U$.
\end{remark}

Relation $r_{(k+1)\theta}(m)$, a coefficient of annihilating field ${x_{\theta}(z)}^{k+1}$, is an infinite sum
\begin{equation}\label{E: leading term 1}
  r_{(k+1)\theta}(m) =\sum_{j_1+\cdots +j_{k+1}=m}x_{\theta}(j_1)\cdots
x_{\theta}(j_{k+1}),
\end{equation}
and the smallest summand in this sum is proportional to
\begin{equation}\label{E: leading term 1.5}
 x_{\theta}(-j-1)^b  x_{\theta}(-j)^a
\end{equation}
for $a+b=k+1$ and $(-j-1)b +(-j)a=m$. Moreover, the shape of every other term $\Phi$ which appears in the sum is greater than the shape
$(-j-1)\sp b  (-j)\sp a$, so we can write
\begin{equation}\label{E: leading term 2}
  r_{(k+1)\theta}(m) =c\, x_{\theta}(-j-1)^b  x_{\theta}(-j)^a+\sum_{ \text{sh\,}\Phi\succ(-j-1)\sp b  (-j)\sp a}c_\Phi X( \Phi)
\end{equation}
for some $c\neq0$ and coefficients $c_\Phi$ for $\Phi\in\mathcal P\sp{k+1}(m)$. The adjoint action of $U(\mathfrak g)$ on $r_{(k+1)\theta}(m) $,
$m\in\mathbb Z$, gives all other relations in $\bar R$. For $u\in U(\mathfrak g)$ the relation
$ r(m)=u\cdot  r_{(k+1)\theta}(m) $ can be written as
\begin{equation}\label{E: leading term 4}
r(m) =\sum_{\text{sh\,}\Psi= (-j-1)\sp b  (-j)\sp a}c_\Psi   X(\Psi)+\sum_{\text{sh\,}\Psi \succ(-j-1)\sp b  (-j)\sp a}c_\Psi  X( \Psi)
+\sum_{\ell(\Psi)<k+1}c_\Psi  X( \Psi).
\end{equation}

Let $c$ be as in (\ref{E: leading term 2}). The actions of $u\in U(\mathfrak g)$ in $\mathfrak g$-modules $\mathcal U$ and $\mathcal S$ are different, but because of
(\ref{E: colored partition 3}) we have
\begin{equation*}
u\Big(c\, x_{\theta}(-j-1)^b  x_{\theta}(-j)^a\Big)=\sum_{\text{sh\,}\Psi =(-j-1)\sp b  (-j)\sp a}c_\Psi   \Psi
\end{equation*}
with the same coefficients $c_\Psi$ as in the first summand in (\ref{E: leading term 4}). Hence $r(m)\neq0$ if and only if $c_\Psi\neq0$ for some $\Psi$.
The smallest $\Psi\in\mathcal P\sp{k+1}(m)$ which appears in the first sum in (\ref{E: leading term 4}) with $c_\Psi \neq0$ we call {\it the leading term of relation}
$r(m)$ and we denote it as $\ell\!\it t\,  r(m)$. Hence we can rewrite (\ref{E: leading term 4}) as
\begin{equation}\label{E: leading term 6}
r(m) =c_\Phi   X(\Phi)+\sum_{\Psi \succ\Phi}c_\Psi  X( \Psi), \qquad \Phi=\ell\!\it t\,  r(m).
\end{equation}

Set
\begin{equation*}
\mathcal R=\{\ell\!\it t\,  r(m)\mid r\in R, \,  m\in\mathbb Z\},\quad \mathcal D=\mathcal P\setminus \{\Psi\Phi\mid\Psi\in\mathcal P, \Phi\in\mathcal R\}.
\end{equation*}
In other words, $\mathcal D$ is the set of all colored partitions which {\it do not contain} any leading term from $\mathcal R$---we say that colored
partitions in $\mathcal D$ satisfy {\ difference conditions $\mathcal R$}.
Since  $r(m)=0$ on $L(k\Lambda_0)$, by using  (\ref{E: leading term 6}) we can replace monomial $X(\Phi)$ with a combination of monomials $X(\Psi)$, $\Psi \succ\Phi$,
and reduce the spanning set (\ref{E: colored partition 2}) to a smaller spanning set parametrized with colored partitions satisfying difference conditions $\mathcal R$:

\begin{proposition}\label{P: spanning set of standard module}  The standard module $L(k\Lambda_0)$ is spanned by the set of vectors
\begin{equation}\label{E:  leading term 8}
X(\Pi)\,\mathbf 1, \qquad \Pi\in\mathcal D\cap\mathcal D_{<0}.
\end{equation}
\end{proposition}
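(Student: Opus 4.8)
The plan is to begin with the Poincar\'e--Birkhoff--Witt spanning set $\{X(\Pi)\mathbf 1 : \Pi\in\mathcal P_{<0}\}$ from (\ref{E: colored partition 2}) and to eliminate, one vector at a time, every $X(\Pi)\mathbf 1$ whose colored partition lies outside $\mathcal D$, rewriting it in terms of \emph{strictly larger} colored partitions. The reduction preserves degree, so I fix $m\in\mathbb Z$ and work inside $\mathcal P_{<0}(m):=\{\Theta\in\mathcal P_{<0} : |\Theta|=m\}$. Since every part of a colored partition in $\mathcal P_{<0}$ has degree $\le -1$, such a partition of degree $m$ has at most $|m|$ parts, and for each admissible length there are only finitely many shapes and finitely many colorings; hence $\mathcal P_{<0}(m)$ is a \emph{finite} set, totally ordered by $\preceq$. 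This finiteness is what makes the reduction terminate, and it permits a downward induction on $\preceq$ starting from the maximal element of $\mathcal P_{<0}(m)$: I shall show that every $X(\Pi)\mathbf 1$ with $\Pi\in\mathcal P_{<0}(m)$ lies in the span of $\{X(\Pi')\mathbf 1 : \Pi'\in\mathcal D\cap\mathcal P_{<0}\}$.

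For the inductive step, if $\Pi\in\mathcal D$ there is nothing to do. If $\Pi\notin\mathcal D$, then by the definition of $\mathcal D$ I may factor $\Pi=\Psi\Phi$ with $\Phi\in\mathcal R$ a leading term; because all parts of $\Pi$ are negative, both $\Psi$ and $\Phi$ lie in $\mathcal P_{<0}$. I then choose a relation $r=r(m')$ with $\Phi=\ell\!\it t\,  r$, so that by (\ref{E: leading term 6}) one has $r=c_\Phi X(\Phi)+\sum_{\Psi'\succ\Phi}c_{\Psi'}X(\Psi')$ with $c_\Phi\ne0$. By Theorem \ref{T:5.1} the loop module $\bar R$ annihilates $L(k\Lambda_0)$, hence $r\mathbf 1=0$ and therefore $X(\Psi)\,r\,\mathbf 1=X(\Psi)\bigl(r\mathbf 1\bigr)=0$ in $L(k\Lambda_0)$. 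The heart of the argument is then to expand the element $X(\Psi)r\in\mathcal U$ in the PBW basis $\{X(\Theta)\}$ and apply it to $\mathbf 1$.

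Two facts will control this expansion. First, any monomial $X(\Theta)$ having a part of nonnegative degree annihilates $\mathbf 1$: in the chosen order the rightmost factor of $X(\Theta)$ carries the largest degree, and if that degree is $\ge 0$ it kills $\mathbf 1$ (as $\mathfrak g\otimes t^{\ge 0}$ acts trivially on $v_k$), so only terms with $\Theta\in\mathcal P_{<0}$ survive. Second, concatenating PBW monomials and reordering via (\ref{E: colored partition 3}) yields $X(\Psi)X(\Phi)=X(\Pi)+\sum_{\Theta\succ\Pi}(\cdots)X(\Theta)$ and $X(\Psi)X(\Psi')=X(\Psi\Psi')+\sum_{\Theta\succ\Psi\Psi'}(\cdots)X(\Theta)$; using that $\mathcal P$ is a free, hence cancellative, commutative monoid together with the compatibility $\Phi\preceq\Psi'\Rightarrow\Psi\Phi\preceq\Psi\Psi'$ of $\preceq$ with products, the hypothesis $\Psi'\succ\Phi$ upgrades to the strict inequality $\Psi\Psi'\succ\Pi$. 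Consequently the coefficient of $X(\Pi)$ in $X(\Psi)r$ is exactly $c_\Phi\ne 0$, and every other surviving monomial $X(\Theta)$ satisfies $\Theta\in\mathcal P_{<0}(m)$ and $\Theta\succ\Pi$. Dividing the relation $X(\Psi)r\,\mathbf 1=0$ by $c_\Phi$ then expresses $X(\Pi)\mathbf 1$ as a linear combination of vectors $X(\Theta)\mathbf 1$ with $\Theta\succ\Pi$ in the finite set $\mathcal P_{<0}(m)$; by the downward induction hypothesis each of these already lies in the desired span, and hence so does $X(\Pi)\mathbf 1$, completing the induction and the proof.

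I expect the main obstacle to be the bookkeeping of the third paragraph: verifying that, after left multiplication by $X(\Psi)$ and passage to the quotient, $\Pi$ remains the unique smallest surviving monomial and its coefficient is undisturbed. This is exactly where the axioms on $\preceq$ (strict compatibility with products, deduced from cancellativity of $\mathcal P$) must be combined with the vanishing on $\mathbf 1$ of monomials carrying a nonnegative-degree part; once these are secured, termination is automatic from the finiteness of $\mathcal P_{<0}(m)$.
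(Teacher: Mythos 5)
Your proposal is correct and follows essentially the same route as the paper, which (citing the mechanism of \cite{MP3}) only sketches the argument in the sentence preceding the proposition: use a relation $r$ with $\ell\!\it t\, r=\Phi$ to rewrite $X(\Pi)\mathbf 1$, $\Pi=\Psi\Phi$, as a combination of strictly larger monomial vectors and iterate. Your write-up correctly supplies the details the paper leaves implicit --- the finiteness of $\mathcal P_{<0}(m)$ guaranteeing termination, the vanishing on $\mathbf 1$ of monomials with a part of nonnegative degree, and the strict compatibility $\Psi'\succ\Phi\Rightarrow\Psi\Psi'\succ\Psi\Phi$ deduced from the order axioms and cancellativity of $\mathcal P$.
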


\begin{remark}
In spite of the fact that the spanning set (\ref{E:  leading term 8}) is obtained by using all defining relations $\bar R$ for level $k$ standard
modules, this set need not be a basis of $L(k\Lambda_0)$! By results in \cite{FKLMM} and \cite{MP3} it is a basis for $\widehat{\mathfrak{sl}}_2$, but by \cite{MP4}
for $\widehat{\mathfrak{sl}}_3$ it is not---at least it is not a basis for a chosen ordered basis $B$ of ${\mathfrak{sl}}_3$.  And then again, by \cite{S}, it
is a basis for the basic modules for affine Lie algebras of types $A_2\sp{(2)}$ and $B_2\sp{(1)}$. In \cite{MP4} it is shown how it can happen that  (\ref{E:  leading term 8}) is not a basis, but it is not clear ``why" it happens---one way or the other.
\end{remark}

\section{Simple Lie algebra of type $C_n$}

We fix a simple Lie algebra $\mathfrak{g}$ of type $C_n$, $n\geq 2$. For a given Cartan subalgebra $\mathfrak h$ and the corresponding
root system $\Delta$ we can write
\begin{equation*}
\Delta = \{\pm(\varepsilon_i\pm\varepsilon_j) \mid i,j=1,...,n\}\setminus\{0\}\ .
\end{equation*}
We chose simple roots as in \cite{B}
\begin{equation*}
\alpha_1= \varepsilon_1-\varepsilon_2,  \ \alpha_2=\varepsilon_2-\varepsilon_3, \  \cdots \ \alpha_{n-1}=\varepsilon_{n-1}-\varepsilon_{n},
 \  \alpha_n = 2\varepsilon_n.
\end{equation*}
Then $\theta=2 \varepsilon_1$. For each $\alpha\in\Delta$ we chose a root vector $X_{\alpha}$ such that $[X_{\alpha},X_{-\alpha}]=\alpha^{\vee}$. For root vectors
$X_{\alpha}$ we shall use the following notation:
$$\begin{array}{ccc}
X_{ij}\quad \text{or just}\quad ij &  \text{if}\   &\alpha =\varepsilon_i + \varepsilon_j\ , \ i\leq j\,,\\
X_{\underline{i}\underline{j}}\quad \text{or just}\quad \underline{i}\underline{j} & \ \text{if}\  &\alpha =-\varepsilon_i - \varepsilon_j\ , \ i\geq j\,,\\
X_{i\underline{j}}\quad \text{or just}\quad i \underline{j} & \ \text{if}\  &\alpha =\varepsilon_i - \varepsilon_j\ , \ i\neq j\,.\\
\end{array}
$$
With previous notation $x_\theta=X_{11}$. We also write for $i=1, \dots, n$
$$
X_{i\underline{i}}=\alpha_i^{\vee}\ \text{or just}\ i\underline{i} \,.
$$
These vectors $X_{ab}$ form a basis $B$ of $\mathfrak g$ which we shall write in a triangular scheme. For example, for $n=3$ the basis $B$ is
$$\begin{array}{cccccc}
11 &  &&  & & \\
12 & 22 & & & & \\
13 & 23 & 33 & & & \\
1\underline{3} & 2\underline{3} & 3\underline{3} & \underline{3}\underline{3}  & & \\
1\underline{2} & 2\underline{2} & 3\underline{2} & \underline{3}\underline{2}  & \underline{2}\underline{2}& \\
1\underline{1} & 2\underline{1} & 3\underline{1} & \underline{3}\underline{1}  & \underline{2}\underline{1} & \underline{1}\underline{1}.
\end{array}
$$
In general for the set of indices $\{1,2,\cdots ,n,\underline{n},\cdots ,\underline{2},\underline{1}\}$ we use order
\begin{equation*}
1\succ 2\succ\cdots\succ n-1\succ n\succ \underline{n} \succ  \underline{n-1} \succ\cdots \succ \underline{2} \succ \underline{1}
\end{equation*}
and a basis element $X_{ab}$ we write in $a^{th}$ column and $b^{th}$ row,
\begin{equation}\label{E:12.6}
B=\{X_{ab}\mid b\in\{1,2,\cdots ,n,\underline{n},\cdots ,\underline{2},\underline{1}\},\ a\in\{1,\cdots ,b\}\} .
\end{equation}
By using (\ref{E:12.6}) we define on the basis $B$ the corresponding reverse  lexicographical order, i.e.
\begin{equation*}
X_{ab}\succ X_{a'b'} \ if\ b\succ b'\ or \ b=b' \ and\ a\succ a'\ .
\end{equation*}
In other words, $X_{ab}$ is larger than $X_{a' b'}$ if $X_{a' b'}$ lies in a row $b'$ below the row $b$, or $X_{ab}$ and
$X_{a' b'}$ are in the same row $b=b'$, but $X_{a'b'}$  is to the right of $X_{ab}$.

For $r\in \{1,\cdots , n,\underline{n}, \cdots , \underline{1}\}$ we introduce the notation
$$
\triangle_r \quad\text{and} \quad {}\sp r\!\triangle
$$
for triangles in the basis $B$ consisting of rows $\{1,\dots,r\}$ and columns $\{r,\dots,\underline{1}\}$. These two triangles have vertices $11,1r, rr$ and
$rr, r\underline{1}, \underline{1}\underline{1}$ with a common vertex $rr$. Note that $\triangle_r$ is above the vertex $rr$, and $ {}\sp r\!\triangle$
is bellow it.  For example, for $n=3$ and $r=\underline{3}$ we have triangles $\triangle_{\underline{3}}$ and $ {}\sp {\underline{3}}\triangle$
$$\begin{array}{cccccc}
11 &  &&  & & \\
12 & 22 & & & & \\
13 & 23 & 33 & & & \\
1\underline{3} & 2\underline{3} & 3\underline{3} & \underline{3}\underline{3}  & & \\
&& & \underline{3}\underline{2}  & \underline{2}\underline{2}& \\
&&& \underline{3}\underline{1}  & \underline{2}\underline{1} & \underline{1}\underline{1}.
\end{array}
$$
\bigskip

We say that $[rs]=\text{ad}\, X_{rs}$ is an arrow on the basis $B$. Let $X_\alpha$ and $X_\beta$ be two elements in the basis $B$ and let
\begin{equation}\label{E: arrows 4}
[\alpha]X_\beta=[ X_{\alpha}, X_\beta]=\sum_{\gamma\in B}c_{\alpha\beta\gamma}X_\gamma.
\end{equation}
Than we say that the arrow $[\alpha]$ {\it moves a point $\beta$ to a point $\gamma$} if $c_{\alpha\beta\gamma}\neq0$ and we write
$$
\beta\overset{[\alpha]}{\longrightarrow}\gamma \qquad\text{or simply}\qquad \beta{\longrightarrow}\gamma .
$$
We say that the arrow
$[\alpha]$ {\it does not move a point $\beta$} if $[\alpha]X_\beta=0$.
\bigskip

For example, for $n=3$ and the arrow $[2\underline{1}]$ we have $11\longrightarrow12$ and $1r\longrightarrow 2r$ for $r\neq 1$, and
$s\underline{2}\longrightarrow s\underline{1}$ for $s\neq\underline{1}$ and
$\underline{2}\underline{1}\longrightarrow\underline{1}\underline{1}$, or we may also write
$$
11\overset{[2\underline{1}]}{\longrightarrow} 12 \overset{[2\underline{1}]}{\longrightarrow}22\quad\text{and}
\quad 13\overset{[2\underline{1}]}{\longrightarrow}23,
\quad 1\underline{3}\overset{[2\underline{1}]}{\longrightarrow}2\underline{3},
\quad 1\underline{2}\overset{[2\underline{1}]}{\longrightarrow}2\underline{2},
\quad 1\underline{1}\overset{[2\underline{1}]}{\longrightarrow}2\underline{1},
$$
$$
 1\underline{2}\overset{[2\underline{1}]}{\longrightarrow}1\underline{1},\quad
2\underline{2}\overset{[2\underline{1}]}{\longrightarrow}2\underline{1},\quad
3\underline{2}\overset{[2\underline{1}]}{\longrightarrow}3\underline{1},\quad
\underline{3}\underline{2}\overset{[2\underline{1}]}{\longrightarrow}\underline{3}\underline{1}\quad\text{and}\quad
\underline{2}\underline{2}\overset{[2\underline{1}]}{\longrightarrow} \underline{2}\underline{1}
 \overset{[1\underline{1}]}{\longrightarrow}\underline{1}\underline{1}.
$$
\bigskip

The arrow  $[\alpha]=\text{ad}\, X_{\alpha}$ acts as a derivation on the symmetric algebra $S(\mathfrak g)$, so for
monomials with factors  $X_\delta\in B$ we have
\begin{equation*}
[\alpha]\prod_{\delta\in B}X_\delta\sp{m_\delta}=\sum_{\beta\in B}m_\beta\prod_{\delta\neq\beta}X_\delta\sp{m_\delta}
X_\beta\sp{m_\beta-1}\left([\alpha]X_\beta\right)
\end{equation*}
and, after inserting (\ref{E: arrows 4}),
\begin{equation}\label{E: arrows 6}
[\alpha]\prod_{\delta\in B}X_\delta\sp{m_\delta}
=\sum_{\beta, \gamma\in B}m_\beta c_{\alpha\beta\gamma}\prod_{\delta\neq\beta}X_\delta\sp{m_\delta}
X_\beta\sp{m_\beta-1}X_\gamma.
\end{equation}
In our (combinatorial) arguments based on this formula it will be convenient to visualize the monomial
$$
\prod_{\delta\in B}X_\delta\sp{m_\delta}
$$
as the set of points at places $\delta$ in our triangle $B$, with multiplicities $m_\delta$, and the resulting monomials in  (\ref{E: arrows 6})
$$
\prod_{\delta\neq\beta}X_\delta\sp{m_\delta}
X_\beta\sp{m_\beta-1}X_\gamma
$$
for $c_{\alpha\beta\gamma}\neq0$ we visualize as a set of points in the basis $B$ obtained by moving one point from the place $\beta$ to the place $\gamma$,
thus changing the multiplicities $m_{\beta}\mapsto m_{\beta}-1$ and $m_{\gamma}\mapsto m_{\gamma}+1$.

We prove the following lemmas by checking when $\alpha, \beta\in\Delta$ implies $\alpha+\beta\in\Delta$.
\begin{lemma}
\label{L: arrows 1} Let $r\in\{2,\dots, n\}$. Arrow $[r\underline{1}]$ moves
\begin{enumerate}
\item $1s\longrightarrow sr$ \ for $s=1,\dots,r$, and
\item does not move any point in $\triangle_r\setminus\{11, 12,\dots, 1r\}$.
\end{enumerate}
\end{lemma}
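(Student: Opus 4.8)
The plan is to exploit exactly the principle stated just before the lemma: for two basis root vectors $X_\alpha,X_\beta$ the structure constants $c_{\alpha\beta\gamma}$ of (\ref{E: arrows 4}) are governed by root addition, so that $[\alpha]$ moves $\beta$ to $\gamma$ precisely when $\gamma=\alpha+\beta$ is again a root ($c_{\alpha\beta\gamma}\neq0$ for $\gamma=\alpha+\beta$ whenever $\alpha+\beta\in\Delta$), while $[\alpha]X_\beta=0$ whenever $\alpha+\beta\notin\Delta\cup\{0\}$. Thus I would reduce the whole lemma to elementary addition of $\varepsilon$-vectors, after first fixing the dictionary between triangular labels and roots.

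First I would record that dictionary. Since $r\in\{2,\dots,n\}$ we have $[r\underline 1]=\text{ad}\,X_{\varepsilon_r-\varepsilon_1}$ with $\varepsilon_r-\varepsilon_1\in\Delta$. The triangle $\triangle_r$, with vertices $11,1r,rr$, is exactly $\{X_{ab}\mid 1\le a\le b\le r\}$, and each such $X_{ab}$ is the root vector for the positive root $\varepsilon_a+\varepsilon_b$ (all indices non-underlined, since $r\le n$). Its first column $\{11,12,\dots,1r\}$ is then $\{X_{\varepsilon_1+\varepsilon_s}\mid s=1,\dots,r\}$.

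For part (1) I would simply compute, for each $s=1,\dots,r$,
$$
(\varepsilon_r-\varepsilon_1)+(\varepsilon_1+\varepsilon_s)=\varepsilon_s+\varepsilon_r,
$$
which lies in $\Delta$; since $s\le r$ this is precisely the root labelling $X_{sr}$, so $[r\underline 1]$ sends $1s$ to a nonzero multiple of $sr$, i.e.\ $1s\longrightarrow sr$. For part (2) I would take any other point of $\triangle_r$, that is $X_{ab}$ with $2\le a\le b\le r$ (the condition $a\ge2$ being exactly the exclusion of the first column), and compute
$$
(\varepsilon_r-\varepsilon_1)+(\varepsilon_a+\varepsilon_b)=\varepsilon_a+\varepsilon_b+\varepsilon_r-\varepsilon_1.
$$
Because $a,b,r$ are all $\ge2$, the coefficient of $\varepsilon_1$ stays $-1$ and cannot cancel, while the total absolute coefficient sum equals $4$; as every root of $C_n$ (namely $\pm\varepsilon_i\pm\varepsilon_j$ or $\pm2\varepsilon_i$) has absolute coefficient sum $2$, this vector is neither a root nor $0$, whence $[r\underline 1]X_{ab}=0$.

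The Lie-theoretic content is therefore entirely routine, and the only genuine obstacle is the index bookkeeping: one must keep straight that $X_{ab}$ sits in column $a$ and row $b$ and corresponds to $\varepsilon_a+\varepsilon_b$, that no underlined index enters $\triangle_r$ when $r\le n$, and that the output pair must be written $sr$ with $s\le r$ to match the chosen labelling. Once this dictionary is nailed down, the two displayed sums settle both parts of the lemma.
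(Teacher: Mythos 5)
Your proposal is correct and is essentially the paper's own proof: the paper disposes of this lemma (and its companions) precisely ``by checking when $\alpha,\beta\in\Delta$ implies $\alpha+\beta\in\Delta$'', which is exactly your reduction to adding $\varepsilon$-vectors, combined with the standard facts that $[\mathfrak g_\alpha,\mathfrak g_\beta]=\mathfrak g_{\alpha+\beta}\neq0$ when $\alpha+\beta\in\Delta$ and $[\mathfrak g_\alpha,\mathfrak g_\beta]=0$ when $\alpha+\beta\notin\Delta\cup\{0\}$. Your bookkeeping (the dictionary $X_{ab}\leftrightarrow\varepsilon_a+\varepsilon_b$ on $\triangle_r$, the coefficient-sum argument for part (2), and the observation that $\alpha+\beta=0$ cannot occur here) correctly fills in the details the paper leaves implicit.
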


\begin{lemma}
\label{L: arrows 2} Let $r\in\{3,\dots, n\}$. Arrows $ [2\underline{1}], [3\underline{1}], \dots, [r-1,\underline{1}]$
\begin{enumerate}
\item move $11$ to points $ 12, 13, \dots, 1(r-1)$,
\item  move points $12, 13, \dots, 1(r-1)$ into $\triangle_{r-1}$,
\item move $1r$ to $sr$ for $s=2,\dots, r-1$, and
\item  do not move any point in $\triangle_r\setminus\{11, 12,\dots, 1r\}$.
\end{enumerate}
\end{lemma}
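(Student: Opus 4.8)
The plan is to reduce every assertion to elementary root arithmetic, exactly as announced in the text preceding Lemma~\ref{L: arrows 1}. Writing each basis vector in $\varepsilon$-coordinates, the arrow $[s\underline{1}]=\mathrm{ad}\,X_{s\underline{1}}$ corresponds to adding the root $\varepsilon_s-\varepsilon_1$, and I will use the standard fact that for the chosen root vectors one has $[X_\alpha,X_\beta]=N_{\alpha\beta}X_{\alpha+\beta}$ with $N_{\alpha\beta}\neq0$ whenever $\alpha+\beta\in\Delta$. Hence $[s\underline{1}]$ moves a point $\beta$ to $\gamma$ precisely when $\gamma=\beta+(\varepsilon_s-\varepsilon_1)\in\Delta$, and does not move $\beta$ when $\beta+(\varepsilon_s-\varepsilon_1)\notin\Delta\cup\{0\}$. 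The coordinates needed here are $11=2\varepsilon_1$, $1t=\varepsilon_1+\varepsilon_t$ for $t\geq2$, and a general point $ab$ of the upper triangle (rows $\le n$) equal to $\varepsilon_a+\varepsilon_b$. Throughout, $s$ runs over $\{2,\dots,r-1\}$.

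For the three ``moves'' statements I would simply compute the translated coordinates. Applying $[s\underline{1}]$ to $11$ gives $2\varepsilon_1+(\varepsilon_s-\varepsilon_1)=\varepsilon_1+\varepsilon_s=1s$, so as $s$ ranges over $\{2,\dots,r-1\}$ the images of $11$ are exactly $12,13,\dots,1(r-1)$; this is (1). Applying $[s\underline{1}]$ to a point $1t$ with $2\le t\le r-1$ gives $\varepsilon_1+\varepsilon_t+(\varepsilon_s-\varepsilon_1)=\varepsilon_s+\varepsilon_t$, which is the root vector indexed by $\min(s,t)\max(s,t)$; since $\max(s,t)\le r-1$ this point lies in $\triangle_{r-1}$, giving (2). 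Applying $[s\underline{1}]$ to $1r$ gives $\varepsilon_1+\varepsilon_r+(\varepsilon_s-\varepsilon_1)=\varepsilon_s+\varepsilon_r$, and because $s<r$ this is the point $sr$; as $s$ ranges over $\{2,\dots,r-1\}$ we obtain $2r,\dots,(r-1)r$, which is (3).

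For the ``does not move'' statement (4), the points of $\triangle_r\setminus\{11,12,\dots,1r\}$ are exactly the $ab$ with $2\le a\le b\le r$, i.e. $\varepsilon_a+\varepsilon_b$ with $a,b\ge2$. Applying $[s\underline{1}]$ produces the weight $\varepsilon_a+\varepsilon_b+\varepsilon_s-\varepsilon_1$. Since $a,b,s\ge2$, the coefficient of $\varepsilon_1$ is exactly $-1$ and cannot be cancelled, while the coefficients on the indices $\geq2$ sum to $3$; thus this weight has $\ell^1$-norm $4$, whereas every root of $C_n$ has $\ell^1$-norm $2$. Hence it is neither a root nor $0$, so $[s\underline{1}]X_{ab}=0$ for every such point and every $s$, which is (4).

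I do not expect a genuine obstacle: all four parts are instances of the same root-addition test, and the only care needed is bookkeeping---correctly reading off the $\varepsilon$-coordinates, and distinguishing the target triangle $\triangle_{r-1}$ in (2) from $\triangle_r$ in (4). The one point worth stating explicitly is the nonvanishing of the structure constants $N_{\alpha\beta}$ when $\alpha+\beta\in\Delta$, which guarantees that ``$\alpha+\beta$ is a root'' really forces the arrow to move the point, so that (1)--(3) produce nonzero images; this is a standard property of the chosen root vectors and requires no computation here.
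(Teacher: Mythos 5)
Your proposal is correct and follows exactly the approach the paper itself uses: the paper proves Lemmas \ref{L: arrows 1} and \ref{L: arrows 2} precisely ``by checking when $\alpha, \beta\in\Delta$ implies $\alpha+\beta\in\Delta$,'' which is your root-addition test in $\varepsilon$-coordinates, together with the standard nonvanishing of the structure constants $N_{\alpha\beta}$. Your bookkeeping (including the $\ell^1$-norm argument for part (4) and the degenerate case $s=t$ in part (2), where the image is $ss=2\varepsilon_s$) is accurate, so nothing is missing.
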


We prove the following lemma by expressing a dual root $\alpha\sp\vee$ in
terms of simple coroots $\alpha_1\sp\vee$, \dots, $\alpha_n\sp\vee$\,:
\begin{lemma}
\label{L: arrows 3} Let $r\in\{2,\dots, n\}$. Then
\begin{enumerate}
\item $ [\underline{r}\underline{1}]X_{1r}=-X_{1\underline{1}}\dots-X_{r-1\underline{r-1}}-2X_{r\underline{r}}\dots-2X_{n\underline{n}}$ ,
\item $ [\underline{r}\underline{r}]X_{rr}=-X_{r\underline{r}}\dots-X_{n\underline{n}}$.
\end{enumerate}
\end{lemma}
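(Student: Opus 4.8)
The plan is to use the fact that each of the two brackets in question lands inside the Cartan subalgebra, so that the whole problem reduces to writing down a single coroot in the basis of simple coroots. First I would note that, in the notation of Section~5, $X_{1r}$ is the root vector for $\varepsilon_1+\varepsilon_r$ and $X_{\underline r\underline 1}$ the root vector for $-(\varepsilon_1+\varepsilon_r)$, while $X_{rr}$ and $X_{\underline r\underline r}$ are the root vectors for $\pm 2\varepsilon_r$. Since the root vectors were chosen with $[X_\alpha,X_{-\alpha}]=\alpha^\vee$, this immediately gives $[\underline r\underline 1]X_{1r}=[X_{\underline r\underline 1},X_{1r}]=-(\varepsilon_1+\varepsilon_r)^\vee$ and $[\underline r\underline r]X_{rr}=-(2\varepsilon_r)^\vee$. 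Everything now hinges on expanding $(\varepsilon_1+\varepsilon_r)^\vee$ and $(2\varepsilon_r)^\vee$ in terms of $\alpha_1^\vee,\dots,\alpha_n^\vee$, which are the basis elements $X_{1\underline 1},\dots,X_{n\underline n}$.

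Next I would fix the normalization of the form. The requirement $\langle\theta,\theta\rangle=2$ with $\theta=2\varepsilon_1$ forces $\langle\varepsilon_i,\varepsilon_j\rangle=\tfrac12\delta_{ij}$, so the long roots $2\varepsilon_i$ have square length $2$ and the short roots $\varepsilon_i\pm\varepsilon_j$ ($i\neq j$) have square length $1$. Using $\alpha^\vee=2\alpha/\langle\alpha,\alpha\rangle$ under the identification $\mathfrak h\cong\mathfrak h^*$, this yields $(2\varepsilon_i)^\vee=2\varepsilon_i$ and $(\varepsilon_1+\varepsilon_r)^\vee=2\varepsilon_1+2\varepsilon_r$. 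Applied to the simple roots it also gives $\alpha_i^\vee=2\varepsilon_i-2\varepsilon_{i+1}$ for $i<n$ and $\alpha_n^\vee=2\varepsilon_n$, that is, $X_{i\underline i}=2\varepsilon_i-2\varepsilon_{i+1}$ and $X_{n\underline n}=2\varepsilon_n$.

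The key step is then to invert these relations. A downward induction on $i$ (base case $i=n$, inductive step $2\varepsilon_i=\alpha_i^\vee+2\varepsilon_{i+1}$) gives the telescoping formula $2\varepsilon_i=\sum_{j=i}^{n}\alpha_j^\vee=X_{i\underline i}+X_{i+1\,\underline{i+1}}+\dots+X_{n\underline n}$ for every $i$. Substituting $r$ for $i$ into $(2\varepsilon_r)^\vee=2\varepsilon_r$ and negating proves part (2). For part (1) I write $(\varepsilon_1+\varepsilon_r)^\vee=2\varepsilon_1+2\varepsilon_r=\sum_{j=1}^{n}X_{j\underline j}+\sum_{j=r}^{n}X_{j\underline j}$; the two sums overlap precisely on the range $j\ge r$, which produces coefficient $2$ in front of $X_{r\underline r},\dots,X_{n\underline n}$ and coefficient $1$ in front of $X_{1\underline 1},\dots,X_{r-1\underline{r-1}}$, and negating gives the asserted expression.

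I expect the only delicate point to be the careful bookkeeping of the normalization, because it is exactly the distinction between the long root $2\varepsilon_r$ (square length $2$) and the short roots $\varepsilon_i\pm\varepsilon_j$ (square length $1$) that is responsible for the asymmetric coefficients $1$ versus $2$ in part (1). Once $\langle\varepsilon_i,\varepsilon_j\rangle=\tfrac12\delta_{ij}$ is pinned down, the remainder is the routine telescoping computation above.
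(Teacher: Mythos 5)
Your proposal is correct and is exactly the paper's approach: the paper proves this lemma "by expressing a dual root $\alpha^\vee$ in terms of simple coroots," i.e., computing $[X_{\underline{r}\underline{1}},X_{1r}]=-(\varepsilon_1+\varepsilon_r)^\vee$ and $[X_{\underline{r}\underline{r}},X_{rr}]=-(2\varepsilon_r)^\vee$ and expanding in the $\alpha_i^\vee=X_{i\underline{i}}$, which is precisely your telescoping computation. Your careful tracking of the normalization $\langle\varepsilon_i,\varepsilon_j\rangle=\tfrac12\delta_{ij}$ (long versus short roots) correctly accounts for the coefficients $1$ versus $2$, matching the formula $-(\varepsilon_1+\varepsilon_r)^\vee=-\sum_{1\leq s<r}\alpha_s^\vee-2\sum_{r\leq s\leq n}\alpha_s^\vee$ used later in the paper's proof of Theorem 6.4.
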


\begin{lemma}
\label{L: arrows 4} Let $r\in\{2,\dots, n\}$. Arrow $[\underline{r}\underline{1}]$ moves
\begin{enumerate}
\item $1s\longrightarrow s\underline{r}$ \ for $s=1,\dots,\underline{r}$, $s\neq r$,
\item $ir\longrightarrow i\underline{1}$ \ for $i=2,\dots, r-1, r$,
\item $ri\longrightarrow i\underline{1}$ \ for $i=r,\dots, \underline{r}$,
\item apart  from $1r$, arrow $[\underline{r}\underline{1}]$  does not move any other point in $\triangle_{\underline{r}}$.
\end{enumerate}
\end{lemma}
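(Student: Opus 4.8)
The plan is to read the whole lemma off from the root-addition criterion, exactly in the spirit announced before Lemmas~\ref{L: arrows 1} and~\ref{L: arrows 2}. The arrow $[\underline{r}\underline{1}]=\text{ad}\,X_{\underline{r}\underline{1}}$ is governed by the root $\alpha=-(\varepsilon_1+\varepsilon_r)$. For a basis vector $X_\beta$ with $\beta\in\Delta$ and $\beta\ne-\alpha$, the bracket $[X_{\underline{r}\underline{1}},X_\beta]$ is a nonzero multiple of $X_{\alpha+\beta}$ when $\alpha+\beta\in\Delta$ and is $0$ otherwise (the relevant $\alpha$-root string through $\beta$ is unbroken, so the structure constant does not vanish); the case $\beta=-\alpha$ is the point $1r$, where $\alpha+\beta=0$ yields a coroot and so is excluded here and handled in Lemma~\ref{L: arrows 3}. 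For a Cartan basis vector $X_{i\underline{i}}=\alpha_i^\vee$ one instead has $[X_{\underline{r}\underline{1}},\alpha_i^\vee]=-\alpha(\alpha_i^\vee)\,X_{\underline{r}\underline{1}}$, which moves $i\underline{i}$ to $\underline{r}\underline{1}$ precisely when $\alpha(\alpha_i^\vee)\ne0$. Thus everything reduces to computing $\varepsilon$-coordinates of $\alpha+\beta$ together with one coroot pairing.

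First I would split the points of $\triangle_{\underline{r}}$ by the shape of their root: the points $ij$ (with $\beta=\varepsilon_i+\varepsilon_j$, $1\le i\le j\le n$), the points $i\underline{j}$ (with $\beta=\varepsilon_i-\varepsilon_j$, $i\ne j$, $j\ge r$), the points $\underline{i}\,\underline{j}$ (with $\beta=-\varepsilon_i-\varepsilon_j$, $i\ge j\ge r$), and the Cartan points $i\underline{i}=\alpha_i^\vee$ ($i\ge r$). Since $\alpha$ lowers the $\varepsilon_1$- and $\varepsilon_r$-coordinates each by one, claims (1)--(3) are immediate. For $1s$ with $s$ unbarred we get $\alpha+\beta=\varepsilon_s-\varepsilon_r$, i.e.\ the point $s\underline{r}$, and for $s=\underline{t}$ (so $\beta=\varepsilon_1-\varepsilon_t$) we get $-(\varepsilon_r+\varepsilon_t)$, again the point $s\underline{r}$; for $ir$ with $i\ge2$ we get $\varepsilon_i-\varepsilon_1$, the point $i\underline{1}$; and for $ri$ we get $\varepsilon_i-\varepsilon_1$ (unbarred $i$) or $-(\varepsilon_1+\varepsilon_t)$ for $i=\underline{t}$, in both cases the point $i\underline{1}$. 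The remaining boundary instance of (3) is the coroot $r\underline{r}$, which moves to $\underline{r}\underline{1}$ because $\alpha(\alpha_r^\vee)\ne0$; I would read off this value from the coroot expansion used in Lemma~\ref{L: arrows 3}.

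The substance of the proof is the completeness statement (4). Here I would argue uniformly that for every point of $\triangle_{\underline{r}}$ not on the above list, $\alpha+\beta$ acquires three distinct nonzero $\varepsilon$-coordinates, or a coordinate of absolute value $3$, and hence falls outside $\Delta=\{\pm(\varepsilon_i\pm\varepsilon_j)\}\setminus\{0\}$, so the point is killed. Indeed, the index $1$ survives in $\alpha+\beta$ with coefficient $-1$ whenever $\beta$ does not carry $\varepsilon_1$ with coefficient $+1$, and the contribution of $\{\varepsilon_i,\varepsilon_j,\varepsilon_r\}$ cannot then collapse to a single admissible coordinate: this disposes of $ij$ with $i,j\notin\{1,r\}$, of $i\underline{j}$ with $i\notin\{1,r\}$, and of every $\underline{i}\,\underline{j}$. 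For the Cartan points $i\underline{i}$ with $i>r$ the support $\{i,i+1\}$ (or $\{n\}$) of $\alpha_i^\vee$ is disjoint from $\{1,r\}$, so $\alpha(\alpha_i^\vee)=0$ and the point does not move. The main obstacle is therefore not any single computation but the bookkeeping: one must check that the four families exhaust $\triangle_{\underline{r}}$ and that the sub-case split along ``index $=1$, index $=r$, or neither'' leaves no gaps, so that the points which move are exactly those in (1)--(3) together with $1r$.
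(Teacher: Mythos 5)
Your proposal is correct and follows exactly the paper's approach: the paper proves Lemmas \ref{L: arrows 1}--\ref{L: arrows 8} precisely by checking when $\alpha+\beta$ lands in $\Delta$ (as announced before Lemma \ref{L: arrows 1}), with the coroot cases handled via the expansion in Lemma \ref{L: arrows 3}, which is what you do. Your treatment of the two boundary subtleties---the point $1r$ (where $\alpha+\beta=0$) and the Cartan point $r\underline{r}$ moving to $\underline{r}\underline{1}$ via $\alpha(\alpha_r^\vee)\neq0$---matches the paper's intent, and your case split by root shape is exhaustive.
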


For $s\in\{\underline{1},\dots, \underline{n}\}$ and $t\in\{1,\dots, n\}$ such that $s=\underline{t}$ we write $t=\underline{s}$.

\begin{lemma}
\label{L: arrows 5} Let $r\in\{1,\dots, n\}$. Arrow $[s\underline{1}]$, $s\in\{2,\dots, \underline{r+1}\}$, moves
\begin{enumerate}
\item $11\longrightarrow 1s$,
\item $1p\longrightarrow ps$ \ for $p=2,\dots, s$, $p\neq \underline{s}$,
\item $1p\longrightarrow sp$ \ for $p=s,\dots, \underline{r+1}$, $p\neq \underline{s}$,
\item $i\underline{s}\longrightarrow i\underline{1}$ \ for $i=2,\dots, \underline{s}$,
\item $\underline{s}i\longrightarrow i\underline{1}$ \ for $i=\underline{s},\dots, \underline{r+1}$,
\item apart  from $1\underline{s}$, arrow $[s\underline{1}]$  does not move any other point in $\triangle_{\underline{r}}$.
\end{enumerate}
\end{lemma}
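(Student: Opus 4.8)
The plan is to reduce the entire statement to the bracket rule for root vectors and then to carry out a finite check on the triangle $\triangle_{\underline r}$. Recall that for $\alpha,\beta\in\Delta$ one has $[X_\alpha,X_\beta]=N_{\alpha,\beta}\,X_{\alpha+\beta}$ with $N_{\alpha,\beta}\neq0$ exactly when $\alpha+\beta\in\Delta$, that $[X_\alpha,X_{-\alpha}]$ is a nonzero multiple of $\alpha^\vee$, that $[X_\alpha,X_\beta]=0$ when $\alpha+\beta\notin\Delta\cup\{0\}$, and that $[X_\alpha,H]=-\alpha(H)\,X_\alpha$ for $H\in\mathfrak h$. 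Thus, in the notation of (\ref{E: arrows 4}), the arrow $[s\underline1]$ moves a root-vector point $\beta$ to $\gamma$ precisely when $\gamma$ is the label of the root $\alpha+\beta$, where $\alpha$ is the root of $X_{s\underline1}$; this happens if and only if $\alpha+\beta\in\Delta$. I would first record $\alpha$ once and for all: writing a plain index $x$ for the contribution $+\varepsilon_x$ and an underlined index $\underline x$ for $-\varepsilon_x$, the root of $X_{s\underline1}$ is $(\text{contribution of }s)-\varepsilon_1$, which is $\varepsilon_s-\varepsilon_1$ for $s\in\{2,\dots,n\}$ and $-\varepsilon_{\underline s}-\varepsilon_1$ for $s\in\{\underline n,\dots,\underline{r+1}\}$. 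Both cases are then handled in parallel.

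The core is a purely combinatorial verification, organized by the position of $\beta$ and using the description $\Delta=\{\pm\varepsilon_i\pm\varepsilon_j\mid i<j\}\cup\{\pm2\varepsilon_i\}$, so that a weight is a root iff it has a single entry $\pm2$ or exactly two entries $\pm1$. Adding the fixed $\alpha$ to the root of each root-vector point and translating an admissible sum back by the same $\pm\varepsilon$ dictionary gives the five families at once. The apex $11$ (root $2\varepsilon_1$) yields $\alpha+2\varepsilon_1=$ the root of $1s$, which is (1). A remaining first-column point $1p$ (with $p\neq1$) has root $\varepsilon_1+(\text{contribution of }p)$, so $\alpha+\beta$ equals the sum of the contributions of $s$ and $p$; this is again of root form, and writing its two indices in the required order gives $ps$ when $p\succeq s$ and $sp$ when $p\preceq s$, which are (2) and (3). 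For a point carrying the index $s$ or $\underline s$, namely $i\underline s$ or $\underline s\,i$, the $+\varepsilon_s$ (respectively $-\varepsilon_s$) cancels against $\alpha$ and leaves $(\text{contribution of }i)-\varepsilon_1$, the root of $i\underline1$, which are (4) and (5). Finally, the unique point whose root is exactly $-\alpha$ is $1\underline s$, and there $\alpha+\beta=0$, so its image lies in $\mathfrak h$ rather than being a single basis vector; this is the point excised in (6).

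The step I expect to require the most care is (6), the assertion that nothing else is moved. For the remaining root-vector points this is a bounded case analysis: each unlisted sum $\alpha+\beta$ acquires a third nonzero $\varepsilon$-coordinate, or an entry $\pm2$ together with a further nonzero entry, and so fails the root criterion and is annihilated. The genuinely delicate points are those of the coroot diagonal $i\underline i$ lying in $\triangle_{\underline r}$: there $\beta=\alpha_i^\vee\in\mathfrak h$, so in place of the root rule one must use $[X_{s\underline1},\alpha_i^\vee]=-\alpha(\alpha_i^\vee)\,X_{s\underline1}$ together with the expressions of the coroots in terms of the simple coroots from Lemma~\ref{L: arrows 3} to decide exactly when $\alpha(\alpha_i^\vee)$ vanishes. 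Reconciling these Cartan contributions with the root-vector moves, while keeping the index orderings straight across the plain/underlined dichotomy, is where the bookkeeping must be done carefully; the representation-theoretic input is entirely contained in the bracket rules recalled at the start.
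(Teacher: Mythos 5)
Your overall strategy coincides with the paper's: the paper disposes of all the arrow lemmas with the single remark that one checks when $\alpha,\beta\in\Delta$ implies $\alpha+\beta\in\Delta$, supplemented by the coroot expressions (Lemma \ref{L: arrows 3}) for the images landing in $\mathfrak h$, and your reduction to the bracket rules plus a finite $\pm\varepsilon$ check is exactly that computation written out. Your verification of items (1)--(5) is essentially right, with one caveat: the ranges of (4) and (5) each contain one coroot point (namely $\alpha_{|s|}^\vee$, i.e.\ $s\underline{s}$ for plain $s$ in (4), resp.\ the point $\underline{s}\,s$ in column $\underline{s}$ for underlined $s$ in (5)), and for that point your cancellation-of-roots argument is vacuous; one must already invoke the Cartan rule $[X_\alpha,\alpha_i^\vee]=-\alpha(\alpha_i^\vee)X_\alpha$ there, not only in (6).

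The genuine gap is in (6). Your blanket assertion that every unlisted sum $\alpha+\beta$ acquires a third nonzero coordinate, or a $\pm2$ entry plus another nonzero entry, is false. The vertex $1\underline{r}$ of $\triangle_{\underline{r}}$ is not covered by (2)--(3), whose ranges stop at $\underline{r+1}$; its root is $\varepsilon_1-\varepsilon_r$, and $\alpha+\beta$ equals the contribution of $s$ minus $\varepsilon_r$, which is a root for every $s\neq r$, so $[s\underline{1}]$ moves $1\underline{r}\longrightarrow s\underline{r}$. Other bottom-row points move as well (e.g.\ $\underline{s}\,\underline{r}\longrightarrow\underline{r}\,\underline{1}$ for plain $s>r$), and so do coroot points outside the lists: for $n\geq 3$ and $r\leq 2$ the arrow $[3\underline{1}]$ moves $2\underline{2}=\alpha_2^\vee$ to $3\underline{1}$, since $\alpha(\alpha_2^\vee)=-1\neq0$; and for $r=1$ the paper's own worked example of $[2\underline{1}]$ records $1\underline{1}\longrightarrow 2\underline{1}$ and $\underline{2}\,\underline{1}\longrightarrow\underline{1}\,\underline{1}$, all inside $\triangle_{\underline{1}}$ and absent from (1)--(5). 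So the bounded case analysis you describe does not close as you assert, and the Cartan bookkeeping you explicitly defer (``must be done carefully'') is precisely where several of the exceptions live. A complete treatment has to carry out that bookkeeping and then either record these additional moves or make precise the restricted scope under which (6) holds (in effect, rows strictly above $\underline{r}$ and coroot points with $\alpha(\alpha_i^\vee)=0$); as written, your proof of (6) asserts a false intermediate claim and omits the very computation that would expose it.
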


\begin{lemma}
\label{L: arrows 6} Let $r\in\{2,\dots, n\}$. Arrow $[\underline{r}\underline{r}]$ moves
\begin{enumerate}
\item $ir\longrightarrow i\underline{r}$ \ for $i=1,\dots, r-1$,
\item $ri\longrightarrow i\underline{r}$ \ for $i=r+1,\dots, \underline{r}$,
\item apart  from $rr$, arrow $[\underline{r}\underline{r}]$  does not move any other point in $\triangle_{\underline{r}}$.
\end{enumerate}
\end{lemma}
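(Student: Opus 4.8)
The plan is to feed every basis vector $X_\beta$ occupying a place of $\triangle_{\underline{r}}$ into the derivation $[\underline{r}\underline{r}]=\text{ad}\,X_{\underline{r}\underline{r}}$, whose root is $\alpha=-2\varepsilon_r$, and to decide one root at a time whether the bracket vanishes. Two elementary facts do all the work: for $\beta\in\Delta$ one has $[X_\alpha,X_\beta]=N_{\alpha\beta}X_{\alpha+\beta}$ with $N_{\alpha\beta}\neq 0$ exactly when $\alpha+\beta\in\Delta$, while for $h\in\mathfrak{h}$ one has $[X_\alpha,h]=-\alpha(h)X_\alpha$. Thus for root vectors the whole question reduces to ``is $-2\varepsilon_r+\beta$ again a root of $C_n$?'', and the decisive shortcut is that a vector $\sum_i c_i\varepsilon_i$ is a root only if it has either a single nonzero coordinate equal to $\pm 2$ or exactly two nonzero coordinates both equal to $\pm 1$; any candidate with three nonzero coordinates, with a coordinate of absolute value $\geq 3$, or with a coordinate $\pm 2$ alongside a further nonzero coordinate is excluded on sight.

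First I would sort the points of $\triangle_{\underline{r}}$ by the shape of their root: the positive roots $\varepsilon_a+\varepsilon_b$ with $1\leq a\leq b\leq n$ fill the unbarred rows; the roots $\varepsilon_a-\varepsilon_q$ fill the barred rows $\underline{q}$ with $q\in\{r,\dots,n\}$; the negative roots $-\varepsilon_p-\varepsilon_q$ occur there as well; and the Cartan vectors $X_{i\underline{i}}=\alpha_i^\vee$ with $i\in\{r,\dots,n\}$ sit on the diagonal. Adding $-2\varepsilon_r$ to $\varepsilon_a+\varepsilon_b$ gives a root precisely when exactly one of $a,b$ equals $r$: the case $b=r,\ a<r$ produces $\varepsilon_a-\varepsilon_r$, i.e. $ar\to a\underline{r}$, which is (1); the case $a=r,\ b>r$ produces $\varepsilon_b-\varepsilon_r$, i.e. $rb\to b\underline{r}$, the unbarred part of (2); and when neither index is $r$ the sum has three nonzero coordinates or two coordinates of absolute value $2$, so the point is fixed. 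Adding $-2\varepsilon_r$ to $\varepsilon_a-\varepsilon_q$ gives a root only for $a=r$, namely $-\varepsilon_q-\varepsilon_r=X_{\underline{q}\underline{r}}$, i.e. $r\underline{q}\to\underline{q}\underline{r}$, the barred part of (2); the subcase $q=r$ produces the coordinate $-3\varepsilon_r$ and the subcase $a,q\neq r$ three nonzero coordinates, so neither moves. Finally $-2\varepsilon_r-\varepsilon_p-\varepsilon_q$ is never a root, so all negative-root points are fixed.

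For the diagonal I would use $[X_{-2\varepsilon_r},\alpha_i^\vee]=(2\varepsilon_r)(\alpha_i^\vee)\,X_{-2\varepsilon_r}$, which is nonzero exactly when $\varepsilon_r$ is not orthogonal to $\alpha_i$; since $\alpha_i=\varepsilon_i-\varepsilon_{i+1}$ for $i<n$ and $\alpha_n=2\varepsilon_n$, among $i\in\{r,\dots,n\}$ this occurs only for $i=r$, so the unique Cartan point of $\triangle_{\underline{r}}$ that moves is $r\underline{r}\to\underline{r}\underline{r}$, the endpoint $i=\underline{r}$ of (2). The one remaining place is $rr$, with root $2\varepsilon_r$; there $-2\varepsilon_r+2\varepsilon_r=0$, the bracket lands in $\mathfrak{h}$, and its value is the one supplied by Lemma \ref{L: arrows 3}(2)---this is precisely why $rr$ must be set apart in (3). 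Collecting the cases, the only points of $\triangle_{\underline{r}}$ not annihilated by $[\underline{r}\underline{r}]$ are those listed in (1) and (2) together with $rr$, which is statement (3). I expect no real obstacle: the only care needed is to match each target root to its column/row address $i\underline{r}$ and to handle the two Cartan brackets---the source $r\underline{r}$ and the special point $rr$---separately from the pure root-addition test, which the coordinate criterion settles at once.
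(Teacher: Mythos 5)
Your proposal is correct and is essentially the paper's own argument: the paper proves this whole block of lemmas by exactly the check you carry out, namely deciding for each point $\beta$ whether $-2\varepsilon_r+\beta$ lies in $\Delta$ using the shapes of $C_n$ roots, and it handles the Cartan-related cases (the coroot points on the diagonal and the special point $rr$) through Lemma \ref{L: arrows 3}, just as you do. No gaps.
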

Note that for simple roots we have arrows
$$
[2\underline{1}]=\text{ad}\,X_{-\alpha_1}, \quad \dots, \quad  [n,\underline{n-1}]=\text{ad}\,X_{-\alpha_{n-1}},
\quad  [\underline{n}\underline{n}]=\text{ad}\,X_{-\alpha_n}\,.
$$

\begin{lemma}
\label{L: arrows 7} Let $r\in\{1,\dots, n-1\}$. Arrow $[r+1,\underline{r}]$ moves
\begin{enumerate}
\item $ir\longrightarrow i(r+1)$ \ for $i=1,\dots, r$,
\item $ri\longrightarrow (r+1)i$ \ for $i=r+1,\dots, \underline{1}$,  $i\neq \underline{r+1}$
\item $i \underline{r+1}\longrightarrow  i\underline{r}$ \ for $i=1,\dots, \underline{r+1}$,
\item $\underline{r+1}i\longrightarrow  i\underline{r}$ \ for $i=\underline{r},\dots, \underline{1}$.
\end{enumerate}
\end{lemma}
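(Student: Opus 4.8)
The plan is to compute the adjoint action $[r+1,\underline r]=\text{ad}\,X_{r+1,\underline r}$ directly in the root system, exactly as for Lemmas~\ref{L: arrows 1}--\ref{L: arrows 6}. The vector $X_{r+1,\underline r}$ is the simple negative root vector $X_{-\alpha_r}$, attached to the root $\varepsilon_{r+1}-\varepsilon_r=-\alpha_r$; note that the diagonal entry $(r+1)\underline r$ equals $X_{-\alpha_r}$ itself and that $r\underline r=\alpha_r^{\vee}$. For a basis root vector $X_\beta$ one has $[X_{-\alpha_r},X_\beta]=c\,X_{\beta-\alpha_r}$ with $c\neq0$ exactly when $\beta-\alpha_r\in\Delta$; when $\beta-\alpha_r=0$ the bracket is $[X_{-\alpha_r},X_{\alpha_r}]=-\alpha_r^{\vee}$; and on a coroot $\alpha_i^{\vee}$ one gets $[X_{-\alpha_r},\alpha_i^{\vee}]=\langle\alpha_r,\alpha_i^{\vee}\rangle X_{-\alpha_r}$. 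So the lemma reduces to adding $\varepsilon_{r+1}-\varepsilon_r$ to each root in turn and recording the triangle position of the result, with the diagonal (Cartan) entries treated by the last two formulas.

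I would first dispose of the four generic families by a single observation: adding $\varepsilon_{r+1}-\varepsilon_r$ turns an occurrence of $\varepsilon_r$ into $\varepsilon_{r+1}$, or an occurrence of $-\varepsilon_{r+1}$ into $-\varepsilon_r$, and sends everything else out of $\Delta$. Thus $\varepsilon_i+\varepsilon_r\mapsto\varepsilon_i+\varepsilon_{r+1}$ gives (1); the roots $\varepsilon_r\pm\varepsilon_p$ appearing as $X_{ri}$ map to their $(r+1)$-analogues, giving (2); $\varepsilon_i-\varepsilon_{r+1}\mapsto\varepsilon_i-\varepsilon_r$ gives (3); and $-\varepsilon_{r+1}-\varepsilon_j\mapsto-\varepsilon_r-\varepsilon_j$ gives (4). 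For each I would write the resulting root with its indices in the canonical order fixed above (larger index first in a pair $\underline a\,\underline b$) in order to read off its column and row. Since the roots of $C_n$ are exactly $\pm\varepsilon_a\pm\varepsilon_b$ and $\pm2\varepsilon_a$, the check $\beta-\alpha_r\in\Delta$ in each listed case is immediate, while for an unlisted $\beta$ the sum is no longer of the form $\pm\varepsilon_a\pm\varepsilon_b$ or $\pm2\varepsilon_a$ and hence leaves $\Delta$, so $X_\beta$ is not moved.

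The only delicate points are the diagonal inputs, which are precisely the places where $\beta-\alpha_r\notin\Delta$; there are three of them and they must be done by hand. The coroot $r\underline r=\alpha_r^{\vee}$ of family (2) gives $[X_{-\alpha_r},\alpha_r^{\vee}]=\langle\alpha_r,\alpha_r^{\vee}\rangle X_{-\alpha_r}=2\,(r+1)\underline r$. The root $r\underline{r+1}=X_{\alpha_r}$, which is simultaneously the excluded $i=\underline{r+1}$ of (2) and the index $i=r$ of (3), has $\beta-\alpha_r=0$ and $[X_{-\alpha_r},X_{\alpha_r}]=-\alpha_r^{\vee}=-\,r\underline r$, in agreement with the move $r\underline{r+1}\to r\underline r$ of (3). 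The coroot $(r+1)\underline{r+1}=\alpha_{r+1}^{\vee}$, the index $i=r+1$ of (3), gives $[X_{-\alpha_r},\alpha_{r+1}^{\vee}]=\langle\alpha_r,\alpha_{r+1}^{\vee}\rangle X_{-\alpha_r}=-\,(r+1)\underline r$. The single thing to verify is that these Cartan integers are nonzero; they are entries of the $C_n$ Cartan matrix, namely $\langle\alpha_r,\alpha_r^{\vee}\rangle=2$ and $\langle\alpha_r,\alpha_{r+1}^{\vee}\rangle=-1$ (the latter holding both for $r+1<n$ and for the long simple root at $r+1=n$, with the normalization $\langle\theta,\theta\rangle=2$). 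This diagonal bookkeeping is the step I expect to be the main obstacle, being the only one not governed by the uniform ``raise $\varepsilon_r$ to $\varepsilon_{r+1}$'' rule; once it is settled, families (1)--(4) follow from the one-line root additions above and the lemma is proved.
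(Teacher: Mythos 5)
Your verification of the four listed families is correct, and it is exactly the paper's (implicit) method: the paper disposes of these lemmas by ``checking when $\alpha,\beta\in\Delta$ implies $\alpha+\beta\in\Delta$'' and by expressing dual roots through simple coroots, which is precisely your root-addition bookkeeping together with the by-hand treatment of the entries $r\underline{r}$, $r\underline{r+1}$, $(r+1)\underline{r+1}$ and the Cartan integers $\langle\alpha_r,\alpha_r^\vee\rangle=2$, $\langle\alpha_r,\alpha_{r+1}^\vee\rangle=-1$ (including the long-root case $r+1=n$).

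One sentence of your proof is false, however: the claim that for an unlisted $\beta$ ``the sum is no longer of the form $\pm\varepsilon_a\pm\varepsilon_b$ or $\pm2\varepsilon_a$ and hence leaves $\Delta$, so $X_\beta$ is not moved.'' That argument applies only to root vectors; among the Cartan points $i\underline{i}=\alpha_i^\vee$ you checked $i=r$ and $i=r+1$, but $\langle\alpha_r,\alpha_i^\vee\rangle\neq0$ also for $i=r-1$. Indeed, for $r\geq 2$ one has $[X_{-\alpha_r},\alpha_{r-1}^\vee]=\langle\alpha_r,\alpha_{r-1}^\vee\rangle X_{-\alpha_r}=-X_{(r+1)\underline{r}}\neq 0$, so the arrow $[r+1,\underline{r}]$ moves the unlisted point $(r-1)\underline{r-1}$ to $(r+1)\underline{r}$. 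This does not invalidate your proof of the lemma as stated: unlike Lemmas \ref{L: arrows 1}, \ref{L: arrows 2}, \ref{L: arrows 4}, \ref{L: arrows 5} and \ref{L: arrows 6}, this lemma asserts no ``does not move any other point'' clause, so completeness of the list is not claimed, and your verifications of moves (1)--(4) stand independently of the offending sentence. You should simply delete that overclaim, or restrict it to root vectors $\beta$; as written it asserts something false, and the same oversight would become a real error if you relied on it elsewhere (e.g.\ when tracking which factors an arrow can create in the proof of Theorem \ref{T: leading terms}).
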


\begin{lemma}
\label{L: arrows 8}  Arrow $[\underline{n},\underline{n}]$ moves
\begin{enumerate}
\item $in\longrightarrow i\underline{n}$ \ for $i=1,\dots, n$,
\item $ni\longrightarrow \underline{n}i$ \ for $i=\underline{n},\dots, \underline{1}$.
\end{enumerate}
\end{lemma}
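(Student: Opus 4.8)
The plan is to read off both families of moves by the same root-addition bookkeeping used for Lemmas~\ref{L: arrows 1}--\ref{L: arrows 7}, noting first that here $[\underline{n}\underline{n}]=\text{ad}\,X_{-\alpha_n}=\text{ad}\,X_{-2\varepsilon_n}$ is the negative \emph{long} simple root vector, so that $[\underline{n}\underline{n}]X_\beta=[X_{-2\varepsilon_n},X_\beta]$ for every basis element $X_\beta$. I would recall the label-to-root dictionary: a point $in$ (with $i\le n$) carries the root $\varepsilon_i+\varepsilon_n$, the diagonal point $nn$ carrying $2\varepsilon_n$; a point $ni$ with $i=\underline{j}$ and $j<n$ carries $\varepsilon_n-\varepsilon_j$, while the diagonal point $n\underline{n}$ is the coroot $X_{n\underline{n}}=\alpha_n^\vee\in\mathfrak h$. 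Everything then reduces to deciding, for each listed $\beta$, whether $-2\varepsilon_n+\beta$ is again a root (or zero), and to naming the resulting basis label.

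For part (1), take $i<n$ first: $(\varepsilon_i+\varepsilon_n)+(-2\varepsilon_n)=\varepsilon_i-\varepsilon_n$ is a root, so $[X_{-2\varepsilon_n},X_{\varepsilon_i+\varepsilon_n}]$ is a nonzero multiple of $X_{\varepsilon_i-\varepsilon_n}=X_{i\underline{n}}$, giving $in\longrightarrow i\underline{n}$. The endpoint $i=n$ is the degenerate sum $2\varepsilon_n+(-2\varepsilon_n)=0$, where the bracket falls into $\mathfrak h$; using $[X_{\alpha_n},X_{-\alpha_n}]=\alpha_n^\vee$ we get $[X_{-2\varepsilon_n},X_{2\varepsilon_n}]=-\alpha_n^\vee=-X_{n\underline{n}}$, i.e.\ $nn\longrightarrow n\underline{n}$, which is the $i=n$ instance of (1).

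For part (2) I would separate the root targets from the single Cartan source. When $i=\underline{j}$ with $1\le j\le n-1$, the point $ni=n\underline{j}$ carries $\varepsilon_n-\varepsilon_j$, and $(\varepsilon_n-\varepsilon_j)+(-2\varepsilon_n)=-\varepsilon_n-\varepsilon_j$ is a root; since $-\varepsilon_n-\varepsilon_j$ is the label $\underline{n}\underline{j}$ (as $n\ge j$), this gives $ni\longrightarrow\underline{n}i$. The endpoint $i=\underline{n}$ is the coroot source $n\underline{n}=\alpha_n^\vee$: here $[X_{-2\varepsilon_n},\alpha_n^\vee]=\alpha_n(\alpha_n^\vee)\,X_{-2\varepsilon_n}=2X_{\underline{n}\underline{n}}$, so $n\underline{n}\longrightarrow\underline{n}\underline{n}$, the $i=\underline{n}$ instance of (2). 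In every case the relevant structure constant is nonzero---automatically so whenever $\alpha+\beta$ is a root, and read off explicitly in the two diagonal cases---so each listed point moves to exactly the single stated target.

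I do not expect a genuine obstacle; the whole content is the bookkeeping of the triangular labels. The only points needing care are the two ``corners'' on the diagonal, $nn$ and $n\underline{n}$, where the image is a coroot rather than a root vector and one must invoke $[X_\alpha,X_{-\alpha}]=\alpha^\vee$ and $[X_{-\alpha},h]=\alpha(h)X_{-\alpha}$ in place of the root-string argument. I would also flag explicitly that the statement describes only the action of $[\underline{n}\underline{n}]$ on row $n$ and on the segment of column $n$ lying below the vertex $nn$, and makes no claim that the arrow fixes everything else; for instance the coroot $(n-1)\underline{n-1}=\alpha_{n-1}^\vee$ is likewise carried onto $\underline{n}\underline{n}$, but it lies off these two lines and plays no part in the present statement.
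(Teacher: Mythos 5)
Your proof is correct and follows essentially the same route as the paper: the off-diagonal moves are exactly the paper's stated method of checking when $\alpha+\beta\in\Delta$ for $\alpha=-2\varepsilon_n$, and your handling of the two diagonal points via $[X_{\alpha_n},X_{-\alpha_n}]=\alpha_n^\vee$ and $[X_{-\alpha_n},h]=\alpha_n(h)X_{-\alpha_n}$ matches the paper's treatment of coroot terms (cf.\ Lemma \ref{L: arrows 3} with $r=n$, which gives $[\underline{n}\underline{n}]X_{nn}=-X_{n\underline{n}}$). Your closing observation is also accurate: unlike Lemmas \ref{L: arrows 4}--\ref{L: arrows 6}, this lemma carries no exhaustiveness clause, so the extra move of the coroot $\alpha_{n-1}^\vee$ onto $\underline{n}\underline{n}$ is consistent with the statement.
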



\section{Leading terms of relations for $C_n\sp{(1)}$}

With order $\preceq$ on the basis $B$ we  define a linear order on $\bar B=\{X(j)\mid X\in B, j\in\mathbb Z\}$ by
$$
X_\alpha(i)\prec X_\beta(j)\quad\text{if}\quad i<j \quad\text{or}\quad i=j, \ X_\alpha\prec X_\beta.
$$
With order $\preceq$ on $\bar B$ we  define a linear order on  $\mathcal{P}$ by
$$
\Pi\prec\Phi\quad\text{if}
$$
\begin{itemize}
\item  $\ell(\Pi)>\ell(\Phi)$ or
\item  $\ell(\Pi)=\ell(\Phi)$, $|\Pi|<|\Phi|$ or
\item  $\ell(\Pi)=\ell(\Phi)$, $|\Pi|=|\Phi|$, $\text{sh\,}\Pi\prec\text{sh\,}\Psi$ in the  reverse lexicographical order or
\item  $\ell(\Pi)=\ell(\Phi)$, $|\Pi|=|\Phi|$, $\text{sh\,}\Pi=\text{sh\,}\Psi$ and colors of $\Pi$ are smaller than the colors of $\Psi$ in reverse lexicographical order.
\end{itemize}
For example,
\begin{equation*}
X_{11}(-3)^2 X_{1\underline{1}}(-2)^2 X_{11}(-2) \prec X_{\underline{1}\underline{1}}(-3) X_{11}(-3) X_{11}(-2)^3
\end{equation*}
because these two colored partitions have the same shape $(-3)^2(-2)^3$ with colors
\begin{equation*}
11\ 11; 1\underline{1}\ 1\underline{1}\ 11\quad \text{and}\quad  \underline{1}\underline{1}\ 11; 11\ 11\ 11
\end{equation*}
and comparing from the right we se $11 = 11$, $1\underline{1} \prec 11$. Generally, we may visualize  colored partitions
$\Pi, \Psi\in \mathcal{P}_{<0}$ by corresponding Young diagrams. For instance, the above mentioned colored partitions
are presented by
\def\sq{\lower.3ex\vbox{\hrule\hbox{\vrule height1.2ex depth1.2ex\kern2.4ex
\vrule}\hrule}\,}
\begin{eqnarray*}
\begin{aligned}
11\quad&\sq\sq\sq       \\
11\quad&\sq\sq\sq         \\
1\underline{1}\quad&\sq\sq  \\
1\underline{1}\quad&\sq\sq  \\
11\quad&\sq\sq
\end{aligned}
& \prec &
\begin{aligned}
\underline{1}\underline{1}\quad&\sq\sq\sq       \\
11\quad&\sq\sq\sq         \\
11\quad&\sq\sq  \\
11\quad&\sq\sq  \\
11\quad&\sq\sq\ .
\end{aligned}
\end{eqnarray*}
\bigskip

A sequence of basis elements $(X_{a_1b_1}, X_{a_2b_2},\cdots ,X_{a_sb_s})$  is a {\it cascade $\mathcal{C}$
in the basis $B$} if for each $i\in \{1,2,\cdots , s-1\}$ we have  $b_{i+1}\prec b_i$ and $a_{i+1}= a_i$, or $b_{i+1} = b_i$ and $a_{i+1}\succ a_i$.
We can visualize a cascade $\mathcal{C}$ in the basis $B$ as a staircase in the triangle $B$ going downwards from the right to the left, or as
a sequence of waterfalls flowing from the right to the left. Sometimes we shall think of a cascade $\mathcal{C}$  as a
set of points in the basis $B$ and write $\mathcal{C}\subset B$.

We say that $\mathcal{C}$  is a {\it cascade with multiplicities} if
for each $X_{a_i b_i}$ in $\mathcal{C}$ a multiplicity $m_{a_ib_i}\in \mathbb{{Z}}_{\geq 0}$ is given.
By abuse of language, we shall say that  in the cascade  $\mathcal{C}$ with multiplicities $X_{a_i b_i}$  is the {\it place}
$a_i b_i\in\mathcal{C}\subset B$  with $m_{a_ib_i}$  {\it points}. We shall also write a cascade
with multiplicities $\mathcal{C}$ in the basis $B$ as a monomial
\begin{equation*}
\prod_{\alpha\in \mathcal{C}} X_{\alpha}^{m_{\alpha}}.
\end{equation*}
For example,
\begin{equation}\label{E: cascade 2}
X_{33}^5 X_{23}^0 X_{2\underline{3}}^3 X_{2\underline{2}}^1 X_{1\underline{2}}^1
\end{equation}
is a cascade with multiplicities which goes from $33$ one step left, then two steps down and than one step left. Along the way we have $5$ points at
the position $33$,
no points on $23$, $3$ points on $2\underline{3}$, and so on:
$$\begin{array}{cccccc}
\cdot&  &&  & & \\
\cdot& \cdot &&  & & \\
\cdot&  0&5&  & & \\
\cdot&  3&\cdot&  \cdot& & \\
1& 1 &\cdot&\cdot  & \cdot& \\
\cdot&  \cdot&\cdot& \cdot &\cdot & \cdot\\
\end{array}
$$
For $j\in\mathbb Z$ and a cascade (with multiplicities)  $\mathcal C$ we can replace each $X_{a_i b_i}$ in $\mathcal{C}$ with $X_{a_i b_i}(j)$---then
we obtain a sequence
$(X_{a_1b_1}(j), X_{a_2b_2}(j),\cdots ,X_{a_sb_s}(j))$
in $\bar{B}$ which we call {\it a cascade (with multiplicities)  $\mathcal{C}(j)$ at degree $j$}. Sometimes we also denote  a cascade with multiplicities
$\mathcal{C}(j)$ as
\begin{equation*}
\prod_{\alpha\in \mathcal{C}} X_{\alpha}(j)^{m_{\alpha}}.
\end{equation*}
For example, the cascade with multiplicities $\mathcal C$ in (\ref{E: cascade 2}) above gives us a cascade with multiplicities $\mathcal{C}(j)$ at degree $j$:
\begin{equation}
\label{E: cascade 4}
X_{33}(j)^5 X_{23}(j)^0 X_{2\underline{3}}(j)^3 X_{2\underline{2}}(j)^1 X_{1\underline{2}}(j)^1.
\end{equation}
Note that in (\ref{E: cascade 4}) factors are not written in ascending order as in (\ref{E: colored partition 1})---here we prefer a way of writing appropriate for
cascade structure (see Remark \ref{Remark42}).
We say that two cascades are an admissible pair $(\mathcal B,\mathcal A)$ if
\begin{equation*}
\mathcal{B}\subset\triangle_r,\quad\text{and}\quad \mathcal{A}\subset {}\sp r\!\triangle
\end{equation*}
for some $r$. We shall also consider the case when $\mathcal B$ is empty and $ \mathcal{A}\subset {}\sp 1\!\triangle (= B)$.
For general rank we may visualize admissible pair of cascades as figure below
\begin{figure}[h]
\begin{center}
\includegraphics[width=2in]{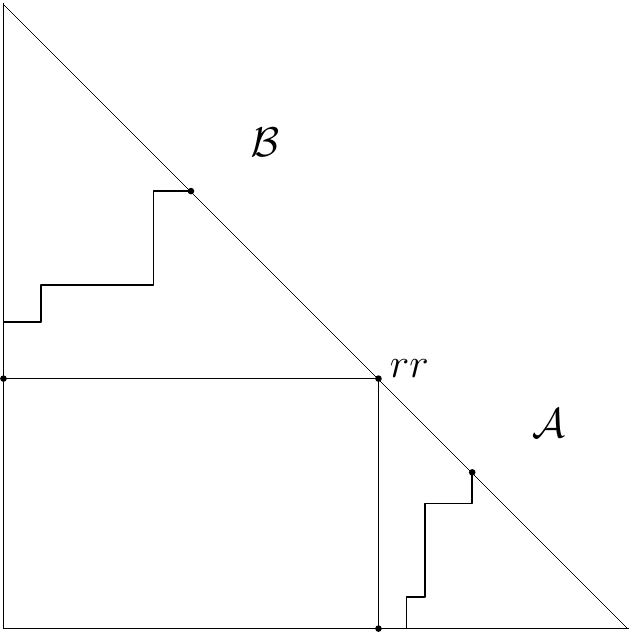}\\
Figure 1.
\end{center}
\end{figure}
\\


\begin{theorem}\label{T: leading terms}
Let
\begin{equation}\label{E:12.shape}
(-j-1)^b (-j)^{a},\quad j\in \mathbb{Z},\quad a+b=k+1,\quad b\geq 0,
\end{equation}
be a fixed shape and let $\mathcal{B}$ and $\mathcal{A}$ be two cascades in degree $-j-1$ and $-j$, with multiplicities
$(m_{\beta,j+1}, \beta\in \mathcal{B})$ and $(m_{\alpha,j}, \alpha \in \mathcal{A})$, such that
\begin{equation}\label{E:12.11}
 \sum_{\beta\in\mathcal{B}}m_{\beta,j+1} =b, \quad \sum_{\alpha\in\mathcal{A}}m_{\alpha,j} =a.
\end{equation}
Let $r\in \{1,\cdots , n,\underline{n}, \cdots , \underline{1}\}$.
If the points of cascade $\mathcal{B}$ lie in the upper triangle $\triangle_r $ and the points of cascade $\mathcal{A}$ lie in the lower triangle
$ {}\sp r\!\triangle$, than
\begin{equation}\label{E:12.12}
\prod_{\beta \in \mathcal{B}} X_{\beta}(-j-1)^{m_{\beta,j+1}}\ \prod_{\alpha \in \mathcal{A}} X_{\alpha}(-j)^{m_{\beta,j}}
\end{equation}
is the leading term of a relation for level $k$ standard module for affine Lie algebra of the type $C_n\sp{(1)}$.
\end{theorem}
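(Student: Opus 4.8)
The plan is to realize the monomial (\ref{E:12.12}) as the leading term of an explicitly constructed relation $r(m)=u\cdot r_{(k+1)\theta}(m)$ with $u\in U(\mathfrak g)$, following the mechanism of (\ref{E: leading term 4})--(\ref{E: leading term 6}). By (\ref{E: leading term 2}) the base relation $r_{(k+1)\theta}(m)$ has leading term $X_{11}(-j-1)^b X_{11}(-j)^a$, i.e. all $k+1$ points sit at the apex $11$, with $b$ of them in degree $-j-1$ and $a$ in degree $-j$. Since by (\ref{E: leading term 4}) the leading term of $u\cdot r_{(k+1)\theta}(m)$ is computed from the action of $u$ on this apex monomial in the commutative algebra $\mathcal S$, where each arrow $[\alpha]=\operatorname{ad}X_\alpha$ acts by the derivation formula (\ref{E: arrows 6}), the whole problem reduces to a combinatorial one: choose a product of arrows whose minimal outcome on the apex monomial is the prescribed pair of cascades. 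Note that arrows preserve the degree, so the shape $(-j-1)^b(-j)^a$ and the split of $b$ points into degree $-j-1$ and $a$ points into degree $-j$ are preserved throughout; only the colors move.

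Concretely, I would read the arrows off the two staircases. Each leftward or downward step of $\mathcal A$ and of $\mathcal B$ is effected by one of the simple arrows of Lemmas \ref{L: arrows 5}, \ref{L: arrows 6}, \ref{L: arrows 7}, \ref{L: arrows 8} (together with Lemmas \ref{L: arrows 1}--\ref{L: arrows 4} for the first descent out of the apex column), and I would take $u$ to be the product of these arrows raised to the multiplicities dictated by $(m_{\beta,j+1})$ and $(m_{\alpha,j})$, applied in an order that moves points progressively down the staircases. The weight of $u$ is then forced, $\operatorname{wt}u=(k+1)\theta-\sum_{\beta}m_{\beta,j+1}\operatorname{wt}X_\beta-\sum_{\alpha}m_{\alpha,j}\operatorname{wt}X_\alpha$, and only the \emph{total} weight of each term is preserved; the way this weight is split between the degree-$(-j-1)$ block and the degree-$(-j)$ block varies with the distribution of arrow applications. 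This weight coupling is what will drive the minimality argument below.

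Two things must then be checked. First, that the cascade monomial (\ref{E:12.12}) actually occurs with nonzero coefficient in $u\big(X_{11}(-j-1)^b X_{11}(-j)^a\big)$: this amounts to verifying that the product of structure constants $c_{\alpha\beta\gamma}$ accumulated along each staircase is nonzero, which by (\ref{E: arrows 4}) is exactly the statement that the relevant sums of roots remain roots --- precisely the content of the ``moves'' clauses of the arrow lemmas. Second, and this is the crux, that (\ref{E:12.12}) is the minimal monomial of its shape occurring in $u(\cdots)$, i.e. it really is the leading term. Since an arrow cannot see the degree of a point, each application of $u$ also produces terms in which some degree-$(-j-1)$ points are dragged into ${}^r\!\triangle$ or some degree-$(-j)$ points are left in $\triangle_r$; I must show every such alternative distribution yields a colored partition strictly larger than (\ref{E:12.12}) in the order $\preceq$.

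The main obstacle is exactly this minimality/non-interference step. The ``does not move'' clauses of Lemmas \ref{L: arrows 1}--\ref{L: arrows 8} are what confine the two staircases to their respective triangles: the arrows chosen for $\mathcal B$ fix every point already lying in the lower triangle ${}^r\!\triangle$, and those chosen for $\mathcal A$ fix every point of the upper triangle $\triangle_r$, so the two cascades are built without mutual interference, the common vertex $rr$ being the only place the triangles meet --- this is exactly where admissibility $\mathcal B\subset\triangle_r$, $\mathcal A\subset{}^r\!\triangle$ is used. Comparing the unwanted terms to (\ref{E:12.12}) then becomes a reverse-lexicographic computation on colors, as in the worked comparison preceding the theorem: since the degree-$(-j)$ parts are the largest parts, the order first rewards pushing them as far down into ${}^r\!\triangle$ as the arrows allow, whereupon conservation of the total weight forces the degree-$(-j-1)$ parts correspondingly high into $\triangle_r$, reproducing (\ref{E:12.12}) as the unique minimum. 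I expect this bookkeeping --- showing that no reshuffling of moves between the two degrees beats the prescribed cascade --- to be the longest and most delicate part, proceeding by induction on the number of steps in the staircases and mirroring the $\widehat{\mathfrak{sl}}_2$ list (\ref{E: MP argument 5}).
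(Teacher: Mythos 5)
Your overall framework is the same as the paper's: take $u\in U(\mathfrak g)$ to be an explicit product of arrows $[\alpha]=\text{ad}\,X_\alpha$, apply it to $r_{(k+1)\theta}(m)$ whose leading term is $X_{11}(-j-1)^bX_{11}(-j)^a$, check that the cascade monomial occurs with a nonzero product of structure constants, and prove minimality. But your specific choice of $u$ --- ``one simple arrow per staircase step, raised to the prescribed multiplicities'' --- is precisely the naive choice that the paper's two examples preceding the proof are designed to rule out, and with that choice the conclusion is false, not merely unverified. The obstruction is Lemma \ref{L: arrows 3}: whenever a point must cross the diagonal, say from $1r$ into row $\underline{r}$, the natural arrow gives a coroot,
\[
[\underline{r}\,\underline{1}]\,X_{1r}=-\sum_{1\le s<r}X_{s\underline{s}}-2\sum_{r\le s\le n}X_{s\underline{s}},
\]
and the summands $X_{s\underline{s}}$ with $s<r$ lie in rows strictly below $\underline{r}$; monomials containing such factors can be \emph{smaller} than the intended leading term, so (\ref{E:12.12}) is then not the leading term of your relation. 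The paper's Case 2 of the upper-barrier step exists exactly to dodge this: such points are re-routed through the diagonal, $1r\to rr$ by $[r\underline{1}]$ and then $rr\to r\underline{r}$ by $[\underline{r}\,\underline{r}]$, since $[\underline{r}\,\underline{r}]X_{rr}=-X_{r\underline{r}}-\dots-X_{n\underline{n}}$ produces only factors in row $\underline{r}$ or above. A second instance (the example with $n_2\neq0$, $n_5\neq0$) shows that a cascade lying in a single column but crossing the diagonal cannot be built from row-to-row arrows at all: the arrow $[3\underline{2}]$ needed for the step $1\underline{3}\to1\underline{2}$ also acts on the already-placed larger factor $X_{12}(-j)^{n_2}$ and produces a smaller term, so one must instead use the arrows $[\underline{s}\,\underline{s}]$, which correspond to no staircase step. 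Note that this is self-interference \emph{within one cascade}; your non-interference argument via the ``does not move'' clauses only addresses interference between the two triangles and cannot see it.

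Relatedly, your minimality heuristic (weight conservation plus reverse-lexicographic comparison) does not detect this failure mode, because the offending coroot factors have exactly the right weight and degree and arise from arrows acting where you want them to act. What the paper actually proves is minimality of the outcome of a rigidly ordered procedure: first an upper barrier (all column-sums of $\mathcal B$ placed in its top row), then the upper cascade built by waterfalls with simple-root arrows, then a lower barrier, then the lower cascade; and at every row change the first $2a$ arrows must be spent pushing the entire degree-$(-j)$ block down the diagonal, $X_{tt}(-j)^a\to X_{t't'}(-j)^a$, before any degree-$(-j-1)$ point is touched --- any other expenditure of an arrow is a ``misuse'' and is shown case by case to yield a strictly larger monomial. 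That misuse analysis, carried out relative to this ordering and to the re-routing device above, is the real content of the proof; without both ingredients the step you defer to ``delicate bookkeeping'' is not delicate but wrong.
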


Before we prove the theorem  let us make a few remarks.
\begin{remark}
We believe that all leading terms of level $k$ relations $\bar{R}$ are given by (\ref{E:12.12}). In the case $k=1$ and $2$ we can check this by direct
calculation. On one side, by using Weyl's character formula for simple Lie algebra $C_n$, we have
\begin{eqnarray*}
\dim L(2\theta) &=& {2n+3\choose 4},\\
\dim L(3\theta) &=& {2n+5\choose 6} .
\end{eqnarray*}

On the other side, in the case $k=1$  for the shape $(-j)^2$ the number of leading terms (\ref{E:12.12}) is
\begin{equation*}
 \sum_{i_1=1}^{2n} \sum_{j_1=1}^{i_1}\sum_{i_2=i_1}^{2n} \sum_{j_2=1}^{j_1}1 =  {2n+3\choose 4},
\end{equation*}
and for the shape $(-j-1)(-j)$
\begin{equation*}
 \sum_{i_1=1}^{2n} \sum_{j_1=1}^{i_1}\sum_{i_2=i_1}^{2n} \sum_{j_2=i_1}^{i_2}1 ={2n+3\choose 4}.
\end{equation*}
In the case $k=2$ and the shape $(-j)^3$ the number of leading terms (\ref{E:12.12}) is
\begin{equation*}
\sum_{i_1=1}^{2n}\sum_{j_1=1}^{i_1} \sum_{i_2=i_1}^{2n}\sum_{j_2=1}^{j_1}\sum_{i_3=i_2}^{2n}\sum_{j_3=1}^{j_2}1 =  {2n+5\choose 6},
\end{equation*}
for the shape $(-j-1)\sp2(-j)$
\begin{equation*}
 \sum_{i_1=1}^{2n} \sum_{j_1=1}^{i_1}\sum_{i_2=i_1}^{2n} \sum_{j_2=1}^{j_1}\sum_{i_3=i_2}^{2n} \sum_{j_3=i_2}^{i_3}1 ={2n+5\choose 6},
\end{equation*}
and for the shape $(-j-1)(-j)\sp2$
\begin{equation*}
 \sum_{i_1=1}^{2n} \sum_{j_1=1}^{i_1}\sum_{i_2=i_1}^{2n} \sum_{j_2=i_1}^{i_2}\sum_{i_3=i_2}^{2n} \sum_{j_3=i_1}^{j_2}1 ={2n+5\choose 6} \ .
\end{equation*}
\end{remark}
\begin{remark}
 The Lie algebra $\mathfrak{g} =\mathfrak{sl}_2$ may be regarded as of type
 $C_n$ for $n=1$, with the standard basis $B$
\begin{equation*}
 x=x_{11}\ \succ \ h=x_{1\underline{1}}\ \succ \ y=x_{\underline{1} \underline{1}}.
\end{equation*}
The standard basis $B$ can be written as the triangle
$$
\begin{array}{cc}
11 & \\
1\underline{1} & \underline{1} \underline{1}
\end{array}
$$
and Theorem \ref{T: leading terms} applies: for the shape  $(-j-1)^b (-j)^a$, $j\in\mathbb Z$, $a+b= k+1$, all leading terms of relations
for level $k$ standard
$\tilde{\mathfrak g}$-modules are monomials
\begin{equation}\label{E:12.16}
\begin{aligned}
&x(-j-1)^b h(-j)^{a_2} x(-j)^{a_1},\quad a_1+a_2 = a , \\
&x(-j-1)^b y(-j)^{a_2} h(-j)^{a_1},\quad  a_1+a_2 = a ,\\
&h(-j-1)^{b_1} x(-j-1)^{b_2} y(-j)^{a},\quad  b_1+b_2 = b , \\
&y(-j-1)^{b_1} h(-j-1)^{b_2} y(-j)^{a},\quad  b_1+b_2 = b
\end{aligned}
\end{equation}
(see Proposition 6.6.1 in \cite{MP3}).
\end{remark}
\begin{example}
In the case $\mathfrak{g}=\mathfrak{sl}_2$ we obtain all leading terms (\ref{E:12.16}) as
\begin{equation*}
\ell\!\it t\,  \left(    (\text{ad}\,  y)^c ( x(-j-1)^b  x(-j)^a)    \right)
\end{equation*}
for $c=0,1,\cdots ,2a+2b$. For example,
\begin{eqnarray}\label{E:12.21}
&& \ell\!\it t  \Big((\text{ad}\, y)^2  (x(-j-1)^2  x(-j)^3)\Big)  \nonumber\\
&& = \ell\!\it t \Big((\text{ad}\,  y)\big(-3 x(-j-1)^2 h(-j) x(-j)^2 - 2 h(-j-1) x(-j-1) x(-j)^3\big )\Big)\nonumber\\
&& =   \ell\!\it t  \Big(6 x(-j-1)^2 h(-j)^2 x(-j) - 6 x(-j-1)^2 y(-j) x(-j)^2 \nonumber\\
&&\quad  + 12 h(-j-1) x(-j-1) h(-j) x(-j)^2 \nonumber\\
&&\quad   + 2 h(-j-1)^2 x(-j)^3 - 4 y(-j-1) x(-j-1) x(-j)^3\Big)\nonumber\\
&&=    x(-j-1)^2 h(-j)^2 x(-j)\nonumber\ .
\end{eqnarray}
We  visualize this calculations by representing monomials
$$
\Big( y(-j-1)^f h(-j-1)^e x(-j-1)^d\Big)    \Big(y(-j)^c h(-j)^b  x(-j)^a\Big)
$$
as multiple points in two copies of triangle $B$, i.e.
$$
\left(\begin{array}{cc}
d & \\
e & f
\end{array}\right)
\left(\begin{array}{cc}
a & \\
b & c
\end{array}\right),
$$
and by representing the action of $\text{ad}\,y $ with an arrow
which moves the points from the place $x=11$ to the place $h=1\underline{1}$, and from the place $h=1\underline{1}$ to the place
$y=\underline{1}\underline{1}$.
Then we can simplify and visualize previous calculation without explicitly keeping track of coefficients: After applying one arrow on
$$\left(\begin{array}{cc}
2 & \\
0 & 0
\end{array}\right)
\left(
\begin{array}{cc}
3 & \\
0 & 0
\end{array}\right)
$$
we obtain two terms by acting on two different factors
$$
\left(\begin{array}{cc}
2 & \\
0 & 0
\end{array}\right)
\left(
\begin{array}{cc}
2 & \\
1 & 0
\end{array}\right),\quad
\left(\begin{array}{cc}
1 & \\
1 & 0
\end{array}\right)
\left(
\begin{array}{cc}
3 & \\
0 & 0
\end{array}\right),
$$
and the first monomial is smaller  because in our reverse lexicographical order we compare parts from the right to the left
$$
h(-j) \prec x(-j), \  x(-j) = x(-j), \ x(-j) = x(-j).
$$
Now we act with the second arrow on each of these monomials---from the first monomial we obtain three terms
$$
\left(\begin{array}{cc}
2 & \\
0 & 0
\end{array}\right)
\left(
\begin{array}{cc}
1 & \\
2 & 0
\end{array}\right),\quad
\left(\begin{array}{cc}
2 & \\
0 & 0
\end{array}\right)
\left(
\begin{array}{cc}
2 & \\
0 & 1
\end{array}\right),\quad
\left(\begin{array}{cc}
1 & \\
1& 0
\end{array}\right)
\left(
\begin{array}{cc}
2 & \\
1 & 0
\end{array}\right),
$$
and from the second monomial we obtain another three terms
$$\left(\begin{array}{cc}
1 & \\
1 & 0
\end{array}\right)
\left(
\begin{array}{cc}
2 & \\
1 & 0
\end{array}\right),\quad
\left(\begin{array}{cc}
0 & \\
2 & 0
\end{array}\right)
\left(
\begin{array}{cc}
3 & \\
0 & 0
\end{array}\right),\quad
\left(\begin{array}{cc}
1 & \\
0 & 1
\end{array}\right)
\left(
\begin{array}{cc}
3 & \\
0 & 0
\end{array}\right).
$$
The smallest term is the very first monomial $x(-j-1)^2 h(-j)^2 x(-j)$ and we see that our guiding "principle" should be:
{\it Act with an arrow on the largest part of the smallest colored partition to obtain the smallest term}.

In general we obtain all leading terms (\ref{E:12.16}) in the following way: First for $c \leq a$ we act with $c$ arrows on
$$\left(\begin{array}{cc}
b & \\
0 & 0
\end{array}\right)
\left(
\begin{array}{cc}
a& \\
0 & 0
\end{array}\right),
$$
and, acting always on the largest possible part $x(-j)$, we get
$$\left(\begin{array}{cc}
b & \\
0 & 0
\end{array}\right)
\left(
\begin{array}{cc}
a-c& \\
c & 0
\end{array}\right).
$$
With $c=a$ the smallest possible part to act on is $h(-j)$ and with additional $d$ arrows, $d \leq a$, we get
$$\left(\begin{array}{cc}
b & \\
0 & 0
\end{array}\right)
\left(
\begin{array}{cc}
0& \\
a-d & d
\end{array}\right).
$$
Once we have obtained a factor $y(-j)^a$, the smallest part the arrow can act on nontrivially is $x(-j-1)$,  and we proceed
with changing the left triangle.
\end{example}

\begin{example}
Essentially the same procedure used for $\mathfrak g={\mathfrak sl}_2$ can be used for $\mathfrak g$ of type $C_n$, $n\geq 2$, a difference
being in a use of different "kinds" of arrows acting on
$$
X_{11} (-j-1)^b X_{11}(-j)^{a}\, .
$$
For example, by applying $2a$ arrows $[2\underline{1}]$ to the factor $X_{11}(-j)^{a}$ we get $X_{22}(-j)^{a}$. We visualize this step
as moving $a$ points from the place $11$ to the place $12$ by using $a$ arrows $[2\underline{1}]$, and after that by using another $a$ arrows
$[2\underline{1}]$ we move $a$
points from the place $12$ to the place $22$ in the second row (see Lemma \ref{L: arrows 6}).  Any additional $[2\underline{1}]$ arrow will act
trivially on monomial
$X_{22}(-j)^{a}$ and will move points corresponding to the factor $X_{11}(-j-1)^{b}$. So with $2a+b+b'= 2a + (b-b') +2b'$ arrows
we get a relation with the leading term (with a premise that we can show it is really the leading term)
$$\left(\begin{array}{lll}
0 & & \\
{b-b'}  & & {b'}\\
 \cdots & & \\
\end{array}\right)
\left(
\begin{array}{lll}
0 & & \\
0& & a\\
 \cdots & &
\end{array}\right)\,.
$$
By following the above idea and notation,  after acting of $2a+b_1+b_2+2b_3$ times with arrows $[ 3\underline{2}]$, we get a
relation with the leading term
 $$
\left(
\begin{array}{lllll}
0 & & & &\\
0 && 0 &&\\
b_1 && b_2 && b_3\\
 \cdots& & &&
\end{array}\right)
\left(\begin{array}{lllll}
0 & & & &\\
0 && 0 &&\\
0 && 0 && a\\
 \cdots& & &&
\end{array}
\right).
$$
At certain point we could start constructing a cascade on $(-j-1)^b$ part, by using, for example, only $2a+b_1$ arrows
$[4\underline{3}]$ to obtain a relation with the leading term
$$
\left(
\begin{array}{lllllll}
0 & & & & & &\\
0 && 0 &&&&\\
0 && b_2 && b_3 &&\\
b_1 && 0 && 0 && 0\\
\cdots&&&&&&\\
\end{array}\right)
\left(\begin{array}{lllllll}
0 & & & & & &\\
0 && 0 &&&&\\
0 && 0 && 0 &&\\
0 && 0 && 0 && a\\
\cdots&&&&&&\\
\end{array}
\right)
$$
After that we could use $a+a'$, $a' < a)$, arrows $[5\underline{4}]$  to start constructing a cascade on $(-j)^{a}$
part and obtain a relation with the leading term
$$
\left(
\begin{array}{lllllllll}
0 & & & & & & & &\\
0 && 0 &&&&&&\\
0 && b_2 && b_3 &&&&\\
b_1 && 0 && 0 && 0 &&\\
0 && 0 && 0 && 0 && 0\\
\cdots &&&&&&&&\\
\end{array}\right)
\left(\begin{array}{lllllllll}
0 & & & & & & & &\\
0 && 0 &&&&&&\\
0 && 0 && 0 &&&&\\
0 && 0 && 0 && 0 &&\\
0 && 0 && 0 && a-a' && a'\\
\cdots &&&&&&&&\\
\end{array}
\right),
$$
that is, a relation with the leading term
$$
X_{14}(-j-1)^{b_1}X_{33}(-j-1)^{b_3}X_{23}(-j-1)^{b_2}X_{55}(-j)^{a'}X_{45}(-j)^{a-a'}.
$$
This is a basic idea how relations with the leading terms (\ref{E:12.12}) in  Theorem \ref{T: leading terms} can be constructed. However,
there is a slight
difficulty we should mention before writing a formal proof.

Take, for example, $n=3$ and the monomial of the form (\ref{E:12.12})
\begin{equation*}
\Pi = X_{1\underline{1}}(-j)^{n_6} X_{1\underline{2}}(-j)^{n_5} X_{1\underline{3}}(-j)^{n_4} X_{13}(-j)^{n_3} X_{12}(-j)^{n_2}
X_{11}(-j)^{n_1}
\end{equation*}
with $n_2\neq 0$ and $n_5\neq 0$. Observe that we have three kinds of arrows corresponding to simple roots, moving one row to the next one:
\begin{equation*}
11 \overset{[2\underline{1}]}{\longrightarrow} 12 \overset{[3\underline{2}]}{\longrightarrow} 13
\overset{[\underline{3}\underline{3}]}{\longrightarrow} 1\underline{3}
 \overset{[3\underline{2}]}{\longrightarrow} 1\underline{2} \overset{[2\underline{1}]}{\longrightarrow} 1\underline{1}\ .
\end{equation*}
So for  $a=n_1 + \cdots + n_6$ we start with $X_{11}(-j)^a$ and apply correct number of arrows on largest possible parts and get
\begin{eqnarray*}
X_{11}(-j)^a \cdots &\overset{[2\underline{1}]}{\longrightarrow}& X_{12}(-j)^{a-n_1} X_{11} (-j)^{n_1} \cdots\nonumber\\
 &\overset{[3\underline{2}]}{\longrightarrow} &  X_{13}(-j)^{a-n_1-n_2}X_{12}(-j)^{n_2} X_{11} (-j)^{n_1} \cdots\nonumber\\
&\overset{[\underline{3}\underline{3}]}{\longrightarrow} &  X_{1\underline{3}}(-j)^{a-n_1-n_2-n_3}X_{13}(-j)^{n_3}
X_{12}(-j)^{n_2} X_{11} (-j)^{n_1}\cdots\ .
\end{eqnarray*}
But now we are stuck if we are to use only arrows $[2\underline{1}]$, $[3\underline{2}]$ i $[3\underline{3}]$: on one side, for moving
points from $1\underline{3}$ to $1\underline{2}$ we should use arrow $[3\underline{2}]$, but in this way we'll not get the smallest
term unless $n_2=0$. (That is, for $n_2\neq 0$ we'll get a smaller term if we apply arrow $[3\underline{2}]$ to a larger factor $X_{12}(-j)^{n_2}$.)
The way around is to use additional kind of arrows
$$11 \overset{[\underline{1}\underline{1}]}{\longrightarrow} 1\underline{1}\qquad \hbox{and}\qquad 12
\overset{[\underline{2}\underline{2}]}{\longrightarrow} 1\underline{2}\ .
$$
With the use of this additional arrows we can construct a relation with the leading term $\Pi$:
\begin{eqnarray}\label{E:12:33}
X_{11}(-j)^a  &\overset{[2\underline{1}]}{\longrightarrow}& \overset{[\underline{1}\underline{1}]}{\longrightarrow}
 \cdots X_{1\underline{1}}(-j)^{n_6} X_{1 2} (-j)^{a-n_1-n_6}  X_{1 1} (-j)^{n_1} \nonumber\\
 \cdots&\overset{[3\underline{2}]}{\longrightarrow}& \overset{[\underline{2}\underline{2}]}{\longrightarrow}
\quad X_{1\underline{1}}(-j)^{n_6} X_{1 \underline{2}} (-j)^{n_5}X_{13}(-j)^{n_3+n_4}X_{12}(-j)^{n_2} X_{11} (-j)^{n_1} \nonumber\\
 \cdots&\overset{[\underline{3}\underline{3}]}{\longrightarrow}&   X_{1\underline{1}}(-j)^{n_6} X_{1\underline{2}}(-j)^{n_5}
X_{1\underline{3}}(-j)^{n_4} X_{13}(-j)^{n_3} X_{12}(-j)^{n_2} X_{11}(-j)^{n_1}.\nonumber
\end{eqnarray}
Similar problem we encounter if we want to construct a relation with the leading term
$$
X_{1\underline{1}}(-j)^{b_1} X_{2 \underline{1}} (-j)^{b_2} X_{3\underline{1}}(-j)^{b_3}X_{\underline{3}\underline{1}}(-j)^{b_4}
X_{\underline{2}\underline{1}} (-j)^{b_5} X_{\underline{1}\underline{1}} (-j)^{b_6}
$$
"from the previous row" by using arrow $[2\underline{1}]$,
$$
\begin{array}{ccccccccccc}
1\underline{2}&&2\underline{2}&&3\underline{2}&&\underline{3}\underline{2}&&\underline{2}\underline{2}&&\\
\downarrow&&\downarrow& &\downarrow& &\downarrow& &\downarrow&&\\
1\underline{1}&\rightarrow&2\underline{1}&\quad&3\underline{1}&\quad&\underline{3}\underline{1}&\quad&
\underline{2}\underline{1}&\rightarrow&\underline{1}\underline{1}\ ,
\end{array}
$$
because we cannot construct the factor $X_{\underline{1}\underline{1}}^{b_6}$ in a leading term by applying arrow on
$X_{\underline{2}\underline{1}}^{b_5+b_6}$
unless $b_1=0$.

\end{example}

\begin{proof}[Proof of Theorem \ref{T: leading terms}] We construct a relation with the leading term (\ref{E:12.12}) by precisely
defined application
of arrows $[rs]=\text{ad}\, X_{rs}$
on a coefficient of the relation
$$
\text{coeff}_{z\sp m}X_{11}(z)\sp{k+1},\quad m=b(j+1)+aj-k-1.
$$
By previous remarks, the leading term of such relation has the shape (\ref{E:12.shape}) and it is enough to analyse the action of arrows $[rs]$
on the colored partition
$$
Z_0=X_{11}(-j-1)\sp{b}X_{11}(-j)\sp{a}.
$$
We prove the theorem in four steps. Since the leading term (\ref{E:12.12}) reminds us of waterfalls on Plitvice lakes, we title these steps as:
preparation of upper barrier, construction of upper cascades,  preparation of lower barrier and construction of lower cascade.
\bigskip

\noindent{\bf Preparation of upper barrier.} \  Let $t\in \{1,\cdots , n,\underline{n}, \cdots , \underline{1}\}$ be the index of uppermost row
in which the upper cascade $\mathcal B$ in (\ref{E:12.12}) has a point. For $i\in \{1,\cdots ,t\}$ denote by $m_i$ the sum of all multiplicities
of $\mathcal B$ in the $i$-th column, i.e.
$$
m_i=\sum_{is\in\mathcal B} m_{is, j+1},\quad m_1+\dots+m_t=b.
$$
In this step we construct a relation with the leading term
\begin{equation*}
X_{1t}(-j-1)\sp{m_1}X_{2t}(-j-1)\sp{m_2}\dots X_{tt}(-j-1)\sp{m_t}X_{tt}(-j)\sp{a}.
\end{equation*}
We consider two cases: $t\in \{1,\cdots , n\}$ and $t\in \{\underline{n}, \cdots , \underline{1}\}$
\bigskip

\noindent {\bf Case 1.} \ Let $t=r$, $r\in \{1,\cdots , n\}$. Set
\begin{equation}\label{E:12.upper barrier 1}
Z_1=[r \underline{1}]\sp{2a+m_1+m_r} [r-1, \underline{1}]\sp{m_{r-1}}\dots  [3 \underline{1}]\sp{m_3} [2 \underline{1}]\sp{m_2}\,Z_0.
\end{equation}
(Note that all listed arrows mutually commute.) Then for some $c\neq0$
\begin{equation}\label{E:12.upper barrier 2}
Z_1=c\,X_{1r}(-j-1)\sp{m_1+m_r}X_{1,r-1}(-j-1)\sp{m_{r-1}}\dots X_{12}(-j-1)\sp{m_2}X_{rr}(-j)\sp{a}+\sum_{M\in\mathcal P}c_MM.
\end{equation}
The first term we obtain by applying all arrows listed in (\ref{E:12.upper barrier 1}) on factors $X_{11}(-j)$, $X_{1r}(-j)$ and $X_{11}(-j-1)$
in the following way (see Lemma \ref{L: arrows 1}):  $2a$ arrows $[r \underline{1}]$ move $X_{11}(-j)\sp a$ to $X_{rr}(-j)\sp a$, i.e. for some $c'\neq0$
\begin{equation}\label{E:12.upper barrier 3}
[r \underline{1}]\sp{2a}X_{11}(-j)\sp{a}=c'\,X_{rr}(-j)\sp{a},
\end{equation}
then $m_2$ arrows $[2 \underline{1}]$ move $m_2$ factors $X_{11}(-j-1)$ to $X_{12}(-j-1)\sp{m_2}$, and so on until
$m_1+m_r$ arrows $[r \underline{1}]$ move $m_1+m_r$ factors $X_{11}(-j-1)$ to $X_{1r}(-j-1)\sp{m_1+m_r}$.
Multiple choice of factors on which any of arrows may act give a proportionality constant $c\neq 0$, and any other way that arrows act
give in (\ref{E:12.upper barrier 2})  a combination of other monomials $M$.

It will be convenient to say that an arrow
$[\beta]=\text{ad}\, X_\beta$ is {\it misused} if it is not used to produce the first term in (\ref{E:12.upper barrier 2})  in the way described above.

If we act with less than $2a$ arrows $[r \underline{1}]$ on $X_{11}(-j)\sp{a}$, we get (cf. Lemmas \ref{L: arrows 1} and \ref{L: arrows 2}) a
monomial $M'$ with a factor $X_\beta(-j)$, $\beta\in\triangle_r\setminus \{rr\}$, and hence $M'\succ X_{11}(-j)\sp{a}$. So assume that we have
used $2a$ arrows $[r \underline{1}]$ on $X_{11}(-j)\sp{a}$, as in (\ref{E:12.upper barrier 3}), but that we misused the rest of arrows by acting
on some other factor different from $X_{11}(-j-1)$. Then we get a monomial $M$ with a factor $X_{11}(-j-1)$, and this implies that $M$ is
greater then the first term in
(\ref{E:12.upper barrier 2}), that is
\begin{equation}\label{E:12.upper barrier 4}
\ell\!\it t\,  Z_1=X_{1r}(-j-1)\sp{m_1+m_r}X_{1,r-1}(-j-1)\sp{m_{r-1}}\dots X_{12}(-j-1)\sp{m_2}X_{rr}(-j)\sp{a}.
\end{equation}
Also note (cf. Lemmas \ref{L: arrows 1} and \ref{L: arrows 2}) that all such monomials $M$ have all other factors $X_{\beta}(-j-1)$
with $\beta$ in the triangle $\triangle_r$, and at least
one factor $X_{lm}(-j-1)$  with $l,m\in\{2,\dots,r-1\}$.

Now set
\begin{equation}\label{E:12.upper barrier 5}
Z_2=[r \underline{1}]\sp{m_2+m_3+\dots+m_r} \,Z_1.
\end{equation}
Then for some $c\neq0$
\begin{equation}\label{E:12.upper barrier 6}
Z_2=c\,X_{1r}(-j-1)\sp{m_1}X_{2r}(-j-1)\sp{m_{2}}\dots X_{rr}(-j-1)\sp{m_{r}}X_{rr}(-j)\sp{a}+\sum_{M\in\mathcal P}c_MM.
\end{equation}
The first term we obtain by applying all arrows listed in (\ref{E:12.upper barrier 5}) on the first term in (\ref{E:12.upper barrier 2}) in
the following way:  first $m_2$ arrows $[r \underline{1}]$ move $X_{12}(-j-1)\sp{m_{2}}$ to $X_{2r}(-j-1)\sp{m_2}$, and so on until
$m_r$ arrows $[r \underline{1}]$ move $X_{1r}(-j-1)\sp{m_{r}}$ to $X_{rr}(-j-1)\sp{m_r}$.

Note that arrows $[r \underline{1}]$
cannot move $X_{rr}(-j)\sp a$ and that any arrow spent on $X_{1s}(-j)$, $s\in\{2,\dots,r-1\}$, will produce a monomial $M'$ greater
than $X_{rr}(-j)\sp{a}$.

What is left to consider is the case when $2a$ arrows $[r \underline{1}]$ are used as in (\ref{E:12.upper barrier 3}), and the rest of
$m_2+\dots+m_r$ arrows are not used as above. Then the action of $m_2+\dots+m_r$ arrows $[r \underline{1}]$ on the first
term in (\ref{E:12.upper barrier 2}) will produce a monomial $M$ with a factor  $X_{1s}(-j-1)$, $s\in\{2,\dots,r\}$, or a factor
$X_{1r}(-j-1)\sp p$, $p>m_1$, in either case a monomial greater than the first term in (\ref{E:12.upper barrier 6}).

On the other hand, we consider three cases of the action of $m_2+\dots+m_r$ arrows $[r \underline{1}]$ on some
$M$ in the second summand (\ref{E:12.upper barrier 2}): (1) arrows will not move a factor $X_{ps}(-j-1)$ for some
$p,s\in\{2,\dots,r-1\}$ in the case when $M$ in (\ref{E:12.upper barrier 2}) is obtained by misusing any arrow of the
form $[s\underline{1}]$, $s\in\{2,\dots,r-1\}$, (2) if some of the arrows $[r \underline{1}]$ is misused, $M$ will
have a factor  $X_{1s}(-j-1)$ for some $s\in\{2,\dots,r-1\}$,  or (3) $M$ will have a factor $X_{1r}(-j-1)\sp{m_1+p}$
with $p\geq 1$, we may briefly say that $M$ has {\it an extra} factor $X_{1r}(-j-1)$. In either of these cases $M$ is
greater than the first term (\ref{E:12.upper barrier 6}), i.e.
\begin{equation*}
\ell\!\it t\,  Z_2=X_{1r}(-j-1)\sp{m_1}X_{2r}(-j-1)\sp{m_{2}}\dots X_{rr}(-j-1)\sp{m_{r}}X_{rr}(-j)\sp{a}.
\end{equation*}
\bigskip

\noindent {\bf Case 2.} \  Let $t=\underline{r}$, $r\in \{1,\cdots , n\}$. As in (\ref{E:12.upper barrier 1}) we set
\begin{equation*}
Z_1=[\underline{r}\underline{1}]\sp{2a+m_1+m_{\underline{r}}}\prod_{s=2}\sp{\underline{r+1}}
[s\underline{1}]\sp{m_{s}}\,Z_0.
\end{equation*}
(Note that some listed arrows do not commute, for example $[s\underline{1}]$ and  $[\underline{s}\underline{1}]$.) Then for some $c\neq0$
\begin{equation}\label{E:12.upper barrier 12}
Z_1=c\,X_{1\underline{r}}(-j-1)\sp{m_1+m_{\underline{r}}}\prod_{s=2}\sp{\underline{r+1}}X_{1s}(-j-1)\sp{m_{s}}X_{\underline{r}\underline{r}}(-j)\sp{a}+\sum_{M\in\mathcal P}c_MM.
\end{equation}
By arguing in the same way as in Case 1, we see that the first term in  (\ref{E:12.upper barrier 12}) is the leading term $\ell\!\it t\,  Z_1$ of $Z_1$.

From $Z_1$ we want to construct a relation with the leading term
\begin{equation}
\label{E:12.upper barrier 13}
\Pi=\prod_{s=1}\sp{\underline{r}}X_{s\underline{r}}(-j-1)\sp{m_{s}}X_{\underline{r}\underline{r}}(-j)\sp{a}.
\end{equation}
We'll proceed as in the Case 1, by using arrows $[\underline{r} \underline{1}]$, except for the factor $X_{r\underline{r}}(-j)\sp{m_r}$
where we encounter the following difficulty: the action
$$
[\underline{r} \underline{1}] \,X_{1r}=[X_{-\varepsilon_1-\varepsilon_r}, X_{\varepsilon_1+\varepsilon_r}]
=-(\varepsilon_1+\varepsilon_r)\sp\vee=-\sum_{1\leq s<r}\alpha_s\sp\vee-2\sum_{r\leq s\leq n}\alpha_s\sp\vee,
$$
or written in our notation,
$$
[\underline{r} \underline{1}] \,X_{1r}=-\sum_{1\leq s<r} X_{s\underline{s}}-2\sum_{r\leq s\leq n}X_{s\underline{s}}
$$
(see Lemma \ref{L: arrows 3}), produce terms $X_{1\underline{1}}\prec \dots\prec X_{r-1\underline{r-1}}\prec X_{r\underline{r}}$.
So the action of arrow
$[\underline{r} \underline{1}]$ on $X_{1r}(-j-1)$ produce factors
$X_{s\underline{s}}(-j-1)$, $s\in\{1,\dots,r-1\}$, which in turn may produce monomials $M$ smaller than $\Pi$. In order to avoid this
difficulty, we use the actions
$$
[r \underline{1}] \,X_{1r}=c\, X_{rr},\qquad
[\underline{r} \underline{r}] \,X_{rr}=-(2\varepsilon_r)\sp\vee=-X_{r\underline{r}}-\dots-X_{n\underline{n}},
$$
(for some $c\neq 0$; see again Lemma \ref{L: arrows 3}). So the action of arrow $[\underline{r} \underline{r}]$ on $X_{rr}(-j-1)$ produce factors
$X_{s\underline{s}}(-j-1)$, $s\in\{r,\dots,n\}$, which are in the $r$-th row in the basis $B$ or above it.

For this reason we "first act" on a factor $X_{1r}(-j-1)\sp{m_r}$ to "empty the place" $1r$ in the first column of the basis $B$:
$$
[r\underline{1}]\sp{m_r} \,X_{1r}(-j-1)\sp{m_r}=c\, X_{rr}(-j-1)\sp{m_r}
$$
(for some $c\neq 0$), and then move further with arrows
$$
[\underline{r}\underline{r}]\sp{m_r} \,X_{rr}(-j-1)\sp{m_r}=c'\, X_{r\underline{r}}(-j-1)\sp{m_r}+\sum_{M\in\mathcal P}c_MM.
$$
(for some $c'\neq 0$). After that, as in Step 1, we shall move factors $X_{1s}(-j-1)\sp{m_s}$, $s\neq r$, from the first column to
factors $X_{s\underline{r}}(-j-1)\sp{m_s}$ in the $\underline{r}$-th row by using arrows $[\underline{r} \underline{1}]$ (see Lemma \ref{L: arrows 4}). This
reasoning motivates us to set
\begin{equation*}
Z_2=[\underline{r} \underline{r}]\sp{m_r}  [r \underline{1}]\sp{m_r}
\prod_{s=2, s\neq r}\sp{\underline{r}}[\underline{r} \underline{1}]\sp{m_{s}}\,Z_1.
\end{equation*}
Then
\begin{equation*}
Z_2=c\,\Pi+\sum_{M\in\mathcal P}c_MM
\end{equation*}
for some $c\neq0$ and $\Pi=\ell\!\it t\,  Z_2$, where  $\Pi$ is given by (\ref{E:12.upper barrier 13}). By arguing as in the Case 1, we obtain $\Pi$ by acting with arrows
on factors in the following way: for $ s\neq r$
$$
[\underline{r} \underline{1}]\sp{m_{s}}\,X_{1s}(-j-1)\sp{m_s}=c_s''\, X_{s\underline{r}}(-j-1)\sp{m_s}+\dots
$$
for some $c_s''\neq 0$, and
$$
[\underline{r} \underline{r}]\sp{m_r}[r \underline{1}] \sp{m_r}\,X_{1r}(-j-1)\sp{m_r}=c'c\, X_{r\underline{r}}(-j-1)\sp{m_r}+\dots.
$$
We'll say that any other way of using arrows on $\ell\!\it t\,  Z_1$ {\it is misusing arrows}.

If arrows $[r\underline{1}]$ are misused, we produce a factor $X_{sr}(-j-1)$ for $s\in\{2,\dots,r-1\}$, or
$X_{rs}(-j-1)$  for $s\in\{r+1,\dots, \underline{r+1}\}$.

If arrows $[\underline{r} \underline{r}]$ are misused, we produce a factor $X_{rr}(-j-1)$ or an extra factor $X_{r\underline{r}}(-j-1)$

If arrows $[\underline{r}\underline{1}]$ are misused, we produce a factor $X_{rs}(-j-1)$ for some $s\neq 1,r,\underline{r} $
or an extra factor $X_{1\underline{r}}(-j-1)$. In either of these cases we obtan a monomial $M$ greater than $\Pi$.

The analysis of how the arrows  $[\underline{r} \underline{1}]$, $[\underline{r} \underline{r}]$, or $[r \underline{1}]$
act on monomials $M$ in the second summand in (\ref{E:12.upper barrier 12}) is analogous to the Case 1.
\bigskip

\noindent {\bf Construction of upper cascade.} \, From the upper barrier
\begin{equation}\label{E:12 upper waterfalls 1}
Z_2=Z_{2;t}=\prod_{s=1}\sp{t}X_{st}(-j-1)\sp{m_{s}}X_{tt}(-j)\sp{a}+\sum_{M\in\mathcal P}c_MM
\end{equation}
in a sequence of steps we construct a relation $Z_{2;q}$ with the leading term
\begin{equation}\label{E:12 upper waterfalls 2}
\prod_{\beta \in \mathcal{B}} X_{\beta}(-j-1)^{m_{\beta,j+1}}\ X_{qq}(-j)^{a},
\end{equation}
where $q$ is the index of the lowest row in which the upper cascade $\mathcal B$ has a point, and $t$ is the index of the highest
row in which $\mathcal B$ has a point. As above, for $i\in \{1,\cdots ,t\}$ we denote by $m_i$ the sum of all multiplicities of
$\mathcal B$ in the $i$-th column, i.e.
$$
m_i=\sum_{is\in\mathcal B} m_{is, j+1},\quad m_1+\dots+m_t=b.
$$
Let $p\in \{1,\cdots ,t\}$ be the index of the first column from the left for which the multiplicity in $\mathcal B$ is not zero,
$$
m_{st;j+1}=0\quad \text{for}\quad s\succ p,\qquad 0<m_{pt;j+1}\leq m_p,\qquad m_{st;j+1}=m_s\quad \text{for}\quad p\succ s .
$$
Let $t'$ be the index of the first row in the basis $B$ below $t$-th row. Then we act with arrows $[t'\underline{t}]$ corresponding to  a
simple root (cf. Lemmas \ref{L: arrows 7} and \ref{L: arrows 8}), moving factors from
$t$-th row to the next row bellow "like in waterfalls"
\begin{equation}\label{E:12 upper waterfalls 3}
\begin{aligned}
&Z_{2;t'}=[t'\underline{t}]\sp{2a+m_1+\dots+m_p-m_{pt;j+1}}Z_{2;t}\\
=c\prod_{s\succ p}&X_{st'}(-j-1)\sp{m_{s}}X_{pt'}(-j-1)\sp{m_{p}-m_{pt;j+1}}\prod_{p\succeq s}X_{st}(-j-1)\sp{m_{st;j+1}}X_{t't'}(-j)\sp{a}\\
&+\sum_{M\in\mathcal P}c_MM.
\end{aligned}
\end{equation}
It is easy to see that the first summand in (\ref{E:12 upper waterfalls 3})  is obtained by using arrows $[t'\underline{t}]$ on the
first summand in (\ref{E:12 upper waterfalls 1}), moving first $X_{tt}(-j)\sp{a}$ to $X_{t't'}(-j)\sp{a}$, then "one by one" factors
$X_{st}(-j-1)\sp{m_{s}}$ to $X_{st'}(-j-1)\sp{m_{s}}$, and finally, $m_{p}-m_{pt;j+1}$ factors $X_{pt}(-j-1)$ to $X_{pt'}(-j-1)$.
It is also easy to see that (apart from the constant $c\neq 0$) the first summand in (\ref{E:12 upper waterfalls 3}) is the leading term of
$Z_{2;t'}$. We say that we constructed {\it a waterfall at $p$-th column}.

Note that arrows we used correspond to simple roots, i.e., $[t'\underline{t}]=\text{ad}\,X_{-\alpha_{r}}$ if $t=r$ and $[t'\underline{t}]
=\text{ad}\,X_{-\alpha_{r-1}}$ if $t=\underline{r}$ for $r\in \{1,\dots ,n\}$. If
$$
m_s=0\quad\text{for}\quad s\succ p,
$$
then we move factors row by row by using arrows corresponding to simple roots, producing a relation $Z_{2;q}$ with the leading term
(\ref{E:12 upper waterfalls 2}):
$$
\prod_{r=t}\sp qX_{pr}(-j-1)\sp{m_{pr;j+1}}\prod_{p\succ s}X_{st}(-j-1)\sp{m_{st;j+1}}X_{qq}(-j)\sp{a}.
$$
If
$$
m_s\neq 0\quad\text{for some}\quad s\succ p,
$$
We take the smallest such $p''\succ p$ and we find the first row below $t$-th row, say $t''$-th row, such that
$$
m_{p''t'';j+1}\neq 0.
$$
Then we move, step by step, all the factors in $i$-th column, $i\prec p$, from $t'$-th row to $t''$-th row,  and create a waterfall
at $p$-th column:
$$
\prod_{s\succ p}X_{st''}(-j-1)\sp{m_{s}}\prod_{r=t}\sp{t''}X_{pr}(-j-1)\sp{m_{pr;j+1}}
\prod_{p\succ s}X_{st}(-j-1)\sp{m_{st;j+1}}X_{qq}(-j)\sp{a}.
$$
In such a way we proceed and construct a relation with the leading term   (\ref{E:12 upper waterfalls 2}).
\bigskip

\noindent{\bf Preparation of lower barrier.} \  Let $t\in \{q \dots , \underline{1}\}$ be the index of uppermost row in which the lower cascade
$\mathcal A$ in (\ref{E:12.12}) has
a point. For $i\in \{q,\cdots ,t\}$ denote by $m_i$ the sum of all multiplicities of $\mathcal A$ in the $i$-th column, i.e.
$$
m_i=\sum_{is\in\mathcal A} m_{is, j},\quad m_q+\dots+m_t=a.
$$
In this step we construct  a lower barrier, i.e. a relation with the leading term
\begin{equation}\label{E:12 lower barrier 1}
\prod_{\beta \in \mathcal{B}} X_{\beta}(-j-1)^{m_{\beta,j+1}}\,X_\text{qt}(-j)\sp{m_q}\dots X_{tt}(-j)\sp{m_t}.
\end{equation}

In previous steps we have acted with arrows on leading terms of relations in such a way that
"the first" $2a$ arrows were "spent" on monomials in degree $-j$, i.e.
$$
[s\underline{r}\,]\sp{2a}X_{rr}(-j)\sp{a}=c'X_{ss}(-j)\sp{a},
$$
and any further action of arrows did not move monomial $X_{ss}(-j)\sp{a}$. Hence with the rest of arrows we could construct
the upper cascades (\ref{E:12 upper waterfalls 2}).

In this step we'll obtain the leading term (\ref{E:12 lower barrier 1}) by acting with arrows only on factors  $X_\gamma(-j-1)$ with
$\gamma$ in a lower triangle
${}\sp q\!\triangle$ , while the action on factors  $X_\beta(-j-1)$ will produce greater terms. For this reason here we'll omit writing
factors in degree $-j-1$.

Let $q'\prec q$ be the index of the first column (row) next to the $q$-th column (row). As before we see that
$$
X_{qt}(-j)\sp{m_q+m_t}\dots X_{qq'}(-j)\sp{m_{q'}}=\ell\!\it t\Big([t\underline q]\sp{m_q+m_t}\dots[q'\underline q]\sp{m_{q'}}X_{qq}(-j)\sp{a}\Big).
$$
If the set of points $\{qt,q't,\dots,tt\}$ in the lower triangle ${}\sp q\!\triangle$ does not contain the point $\underline{t}t$, then we apply
$$
[t\underline q]\sp{m_{q'}+\dots+m_t}
$$
to produce a relation with the leading term (\ref{E:12 lower barrier 1}). On the other hand, if the set of points $\{qt,q't,\dots,tt\}$ contains the point
$\underline{t}t$, then we should modify our construction---as in the Case 2 above---and, for moving the factor
$X_{\underline{t}q}(-j)\sp{m_{\underline{t}}}$
to  $X_{\underline{t}t}(-j)\sp{m_{\underline{t}}}$  via $X_{\underline{t}\underline{t}}(-j)\sp{m_{\underline{t}}}$, we act with arrows
$$
[tt]\sp{m_{\underline t}}[\underline t\underline q]\sp{m_{\underline t}}
$$
instead of $[t\underline q]\sp{m_{\underline t}}$.
\bigskip

\noindent {\bf Construction of lower cascade.} \, From  a relation with the leading term (\ref{E:12 lower barrier 1}) we construct
a relation with the leading term
$$
\prod_{\beta \in \mathcal{B}} X_{\beta}(-j-1)^{m_{\beta,j+1}}\ \prod_{\alpha \in \mathcal{A}} X_{\alpha}(-j)^{m_{\alpha,j}}
$$
in a way described in the construction of upper cascade, except that here we act with arrows only on factors  $X_\gamma(-j-1)$ with
$\gamma$ in a lower triangle
${}\sp q\!\triangle$ , while the action on factors  $X_\beta(-j-1)$ will produce greater terms.
%
\end{proof}

\section{Conjectured colored Rogers-Ramanujan type identities}

By using Theorem  \ref{T: leading terms} we can explicitly describe the reduced spanning set in Proposition \ref{P: spanning set of standard module}. We guess that this spanning set is a basis:
\bigskip

\noindent {\bf Conjecture 1.} {\it Let $n\geq 2$ and $k\geq 2$. We consider the standard module $L(k\Lambda_0)$ for the affine Lie
algebra of type $C_n\sp{(1)}$ with the basis
$$
\{X_{ab}(j)\mid  {ab}\in B,\   j\in\mathbb Z \}
\cup\{c, d\},
$$
where $B=\{ab \mid    b\in\{1,2,\cdots ,n,\underline{n},\cdots ,\underline{2},\underline{1}\},\ a\in\{1,\cdots ,b\} \}$.

We conjecture that the set of monomial vectors
\begin{equation}\label{E: conjecture 1a}
\prod_{ab\in B, \,j>0}X_{ab}(-j)\sp{m_{ab;j}}\,v_0,
\end{equation}
satisfying difference conditions
\begin{equation*}
\sum_{ab\in\mathcal B} m_{ab;j+1}+\sum_{ab\in\mathcal A} m_{ab;j}\leq k
\end{equation*}
for any admissible pair of cascades $(\mathcal B,\mathcal A)$, is a basis of $L(k\Lambda_0)$.}
\bigskip

As already mentioned, the conjecture is true for $n=1$ and all $k\geq 1$ \cite{MP3} and for $k=1$ for all $n\geq 2$ \cite{P\v S}. We have also checked by hand the corresponding combinatorial identity below for partitions of $m=1,\dots, 8$ in the case $n=k=2$.

If our conjecture is true, then we have a combinatorial Rogers-Ramanujan type identities by using Lepowsky's product
formula for principaly specialized characters of standard modules (see \cite{L}, cf. \cite{M2}, \cite{MP3}). In the case of
$n=2$ and $k\geq 1$ we have product formulas for principally specialized characters of standard $C_2\sp{(1)}$-modules
$L(k\Lambda_0)$
\begin{equation}\label{E: conjecture 2}
\prod_{\substack{j\geq 1\\j\not\equiv 0\,\textrm{mod}\,2}}\!\!\frac{1}{1-q^j}
\prod_{\substack{j\geq 1\\j\not\equiv 0,\pm1,\pm2,\pm3\,\textrm{mod}\,2k+6}}\!\!\frac{1}{1-q^j}
\prod_{\substack{j\geq 1\\j\not\equiv 0,\pm1,\pm(k+1),\pm(k+2),k+3\,\textrm{mod}\,2k+6}}\!\!\frac{1}{1-q^j}.
\end{equation}
This product can be interpreted combinatorially in the following way: For fixed $k$ let $\mathcal C_k$ be a disjoint union of
integers in three colors, say $j_1, j_2, j_3$ is the integer $j$ in colors $1,2,3$, satisfying the following congruence conditions
\begin{equation}\label{E: conjecture 3}
\begin{aligned}
&\{j_1\mid j\geq 1, j\not\equiv 0\,\textrm{mod}\,2\}, \\
&\{j_2\mid j\geq 1, j\not\equiv 0,\pm1,\pm2,\pm3\,\textrm{mod}\,2k+6\}, \\
&\{j_3\mid j\geq 1, j\not\equiv 0,\pm1,\pm(k+1),\pm(k+2),k+3\,\textrm{mod}\,2k+6\}. \\
\end{aligned}
\end{equation}
Set $|j_a|=j$. If we expand the product (\ref{E: conjecture 2}) in Taylor series, then the  coefficient of $q^m$ can be interpreted
as a number of colored partitions of $m$
\begin{equation}\label{E: conjecture 4}
m=\sum_{j_a\in\mathcal C_k}j_af_{j_a}.
\end{equation}
(To be correct, in (\ref{E: conjecture 4}) we should write $m=\sum|j_a|f_{j_a}$.)
For example, for $k=2$ we have
\begin{equation*}
\mathcal C_2=\{1_1,  3_1, 5_1, 7_1,\dots\}\sqcup\{4_2, 5_2, 6_2,14_2,\dots\}\sqcup\{2_3, 8_3 , 12_3, 18_3\dots\};
\end{equation*}
all ordinary partitions of $m=5$ are
$$
5, \ 4+1, \ 3+2, \ 3+1+1, \ 2+2+1, \ 2+1+1+1, \ 1+1+1+1+1,
$$
and all colored partitions of $5$ with colored parts in $\mathcal C_2$ are
$$
5_1,\ 5_2, \
 \ 4_2+1_1,
 \ 3_1+2_3,
 \ 3_1+1_1+1_1, \ 2_3+2_3+1_1, \ 2_3+1_1+1_1+1_1, \ 1_1+1_1+1_1+1_1+1_1.
$$

On the other hand, in the principal specialization $e\sp{-\alpha_i}\mapsto q\sp1$, $i=0,1,2$, the sequence of root subspaces in $C_2\sp{(1)}$
\begin{equation}\label{E: conjecture 5}
X_{ab}(-1), ab\in B,\quad X_{ab}(-2), ab\in B,\quad X_{ab}(-3), ab\in B,\quad \dots
\end{equation}
obtains degrees
\begin{equation}\label{E: conjecture 6}
\begin{array}{cccc}
1 &  &&   \\
2 & 3 & &  \\
3 & 4 & 5 & \\
4 & 5 & 6 & 7  \\
\end{array}
\begin{array}{cccc}
5 &  &&   \\
6 & 7 & &  \\
7 & 8 & 9 & \\
8 & 9 & 10 & 11  \\
\end{array}
\begin{array}{cccc}
9  &  &&   \\
10 & 11& &  \\
11 &  12& 13 & \\
12& 13 & 14& 15  \\
\end{array}
\dots
\end{equation}
One way or the other, in (\ref{E: conjecture 6})  we see almost two sequences of natural numbers and almost one sequence of odd numbers.
In order to make numbers distinct, we consider four colors $1,2,3,4$, say
\begin{equation}\label{E: conjecture 7}
\begin{array}{cccc}
1_1 &  &&   \\
2_2 & 3_2 & &  \\
3_3 & 4_3 & 5_3 & \\
4_4 & 5_4 & 6_4 & 7_4  \\
\end{array}
\begin{array}{cccc}
5_1 &  &&   \\
6 _2& 7_2 & &  \\
7_3 & 8_3 & 9_3 & \\
8_4 & 9_4 & 10_4 & 11_4  \\
\end{array}
\begin{array}{cccc}
9_1  &  &&   \\
10 _2 & 11 _2& &  \\
11_3 &  12_3& 13_3 & \\
12_4& 13_4 & 14_4& 15_4  \\
\end{array}
\dots,
\end{equation}
so that numbers in the first row have color 1, numbers in the second row have color 2, and so on. In other words, for fixed $n=2$ we
consider a disjoint  union $\mathcal D_2$  of integers in four colors, say $j_1,  j_2,  j_3,  j_4$ is the integer $j$ in colors $1,2,3,4$,
satisfying the congruence conditions
\begin{equation}\label{E: conjecture 8}
\begin{aligned}
&\{j_1\mid j\geq 1, j\equiv 1\,\textrm{mod}\,4\}, \\
&\{j_2\mid j\geq 2, j\equiv 2,3\,\textrm{mod}\,4\}, \\
&\{j_3\mid j\geq 3, j\equiv 0,1,3\,\textrm{mod}\,4\}, \\
&\{j_4\mid j\geq 4, j\equiv 0,1,2,3\,\textrm{mod}\,4\} \\
\end{aligned}
\end{equation}
and arranged in a sequence of triangles (\ref{E: conjecture 7}).
For adjacent triangles in  (\ref{E: conjecture 7}) corresponding to
$$
\dots,\quad X_{ab}(-j), ab\in B,\quad X_{ab}(-j-1), ab\in B,\quad\dots
$$
in  (\ref{E: conjecture 5}) and a fixed row $r$ we consider the corresponding two triangles: ${}\sp r\!\triangle$ on the left and $\triangle_r$
on the right. For example, for the third row we have $r=\underline{2}$ and two triangles denoted by bullets
\begin{equation}\label{E: conjecture 9}
\dots
\begin{array}{cccc}
\cdot &  &&   \\
\cdot & \cdot & & \\
\cdot &  \cdot&  \cdot&  \\
 \cdot &  \cdot& \cdot & \cdot  \\
\end{array}
\begin{array}{cccc}
\cdot &  &&   \\
\cdot & \cdot & & \\
\cdot &  \cdot&  \bullet  &  \\
 \cdot &  \cdot& \bullet  & \bullet   \\
\end{array}
\begin{array}{cccc}
\bullet  &  &&   \\
\bullet  & \bullet  & & \\
\bullet  &  \bullet  &  \bullet  &  \\
 \cdot &  \cdot& \cdot & \cdot  \\
\end{array}
\begin{array}{cccc}
\cdot &  &&   \\
\cdot & \cdot & & \\
\cdot &  \cdot&  \cdot&  \\
 \cdot &  \cdot& \cdot & \cdot  \\
\end{array}
\dots
\end{equation}
are \ ${}\sp{\underline{2}}\triangle$ on the left and $\triangle_{\underline{2}}$ on
the right. We say that two cascades
$$
\mathcal A\subset {}\sp r\!\triangle\quad\text{and}\quad\mathcal B\subset \triangle\sb r
$$
form an admissible pair of cascades in the sequence  (\ref{E: conjecture 7}).

If our Conjecture 1 is correct, then  the  coefficient of $q^m$ in the principally specialized character of $L(k\Lambda_0)$
equals the number of basis vectors (\ref{E: conjecture 1a}) of degree $-m$, i.e., equals the number of colored partitions of $m$
\begin{equation}\label{E: conjecture 10}
m=\sum_{j_a\in\mathcal D_2}j_af_{j_a}
\end{equation}
satisfying difference conditions
\begin{equation}\label{E: conjecture 11}
\sum_{j_a\in\mathcal A} f_{j_a}+\sum_{j_b\in\mathcal B} f_{j_b}\leq k:
\end{equation}
for every admissible pair of cascades in the sequence  (\ref{E: conjecture 7}).
\bigskip

\begin{example}
Let $n=k=2$. Then the first nine terms of Taylor series (\ref{E: conjecture 2}) are
\begin{equation}\label{Taylor}
 1+q+2q^2+3q^3 + 5q^4 +8q^5 + 12q^6+17q^7+25q^8+\cdots.
\end{equation}
By enumerating all  admissible cascades for the basis $B$ of $C_2$ we made a list of $4\times 8=32$ difference conditions. From the list of difference conditions and the list of ordinary partitions,  direct calculation gives all colored partitions of  $m=1, 2, \cdots, 8$ with colored parts in $\mathcal D_2$:
\begin{eqnarray}\label{colored patrition}
1 & = & 1_1\nonumber\\
2 & = & 2_2 = 1_1+1_1\nonumber\\
3 & = & 3_2 = 3_3 =  2_2 + 1_1\nonumber\\
4 & = & 4_3 = 4_4 = 3_2 + 1_1 = 3_3 + 1_1 = 2_2 + 2_2 \nonumber\\
5 & = & 5_1 = 5_3 = 5_4 = 4_3 + 1_1 = 4_4 + 1_1 = 3_2 + 2_1 = 3_3 + 2_1 = 3_2 + 1_1 + 1_1\nonumber\\
6 & = & 6_2 = 6_4 = 5_1 + 1_1 = 5_3 + 1_1 = 5_4 + 1_1 = 4_3 + 2_2 = 4_4 + 2_2 = 4_3 + 1_1 + 1_1\nonumber\\
7 & = & 7_2 = 7_3 = 7_4 = 6_2 + 1_1 = 6_4 + 1_1 = 5_1 + 2_2 = 5_3 + 2_2 = 5_4 + 2_2 = 5_3 + 1_1 + 1_1\nonumber\\
  & = & 5_4 + 1_1 + 1_1 = 4_3 + 3_2 = 4_3 + 3_3 = 4_4 + 3_2 = 4_4 + 3_3 = 4_3 + 2_2 +1_1 \nonumber\\
  & = & 3_2 + 3_2 +1_1 = 3_2 + 3_3 + 1_1 \nonumber\\
8 & = & 8_3 = 8_4 = 7_2 + 1_1 = 7_3 + 1_1 = 7_4 +1_1 = 6_2 + 2_2 = 6_4 + 2_2 = 6_2 + 1_1 + 1_1 \nonumber\\
  & = & 6_4 + 1_1 + 1_1 =   5_1 + 3_2 = 5_1 + 3_3 = 5_2 + 3_2 = 5_2 + 3_3= 5_3 + 3_2 = 5_3 + 3_3 \nonumber\\
  & = & 5_3 + 2_2 + 1_1 = 5_4 + 2_2 + 1_1 = 4_3 + 4_3 = 4_3 + 4_4 = 4_4 + 4_4 = 4_3+3_2+1_1\nonumber\\
  & = & 4_3+3_3+1_1 = 4_4+3_2+1_1 = 4_3+2_2+2_2 = 3_2+3_2 +1_1+1_1\nonumber\ .
\end{eqnarray}
Hence the number of partitions (\ref{E: conjecture 10}) satisfying difference conditions  (\ref{E: conjecture 11}) coincides with the coefficients of above Taylor series
(\ref{Taylor}) for $m=1,2,\cdots,8$.

We omit the details of  calculations above; we'll only explain how difference conditions eliminated the colored partition $5_1 + 2_2 + 1_1$  in the case $m= 8$. First of all, notice that   $5_1$ belongs to the triangle $X_{ab}(-2)$, and $2_2$ and $ 1_1$   belong to the triangle
$X_{ab}(-1)$ (see \ref{E: conjecture 7}). Now we chose $r=1$ and consider the triangles \ ${}\sp{1}\triangle$ and $\triangle_{1}$

\begin{equation*}
\begin{array}{cccc}
\bullet  &  &&   \\
\bullet  & \bullet  & & \\
\bullet  &  \bullet  &  \bullet  &  \\
\bullet  &  \bullet  &  \bullet  & \bullet \\
\end{array}
\begin{array}{cccc}
\bullet &  &&   \\
\cdot & \cdot & & \\
\cdot &  \cdot&  \cdot&  \\
 \cdot &  \cdot& \cdot & \cdot  \\
\end{array}
\end{equation*}
One pair of admissible cascades is
\begin{equation*}
\begin{array}{cccc}
m_{11;1}  &  &&   \\
m_{12;1}  & \cdot  & & \\
m_{1\underline{2};1}  &  \cdot  &  \cdot  &  \\
m_{1\underline{1};1}  &  \cdot  &  \cdot  & \cdot\\
\end{array}
\begin{array}{cccc}
m_{11;2} &  &&   \\
\cdot & \cdot & & \\
\cdot &  \cdot&  \cdot&  \\
 \cdot &  \cdot& \cdot & \cdot  \\
\end{array}
\end{equation*}
and the  corresponded difference condition---one of 32 conditions---is given by
$$m_{11;2} +m_{11;1} + m_{12;1} + m_{1\underline{2};1} + m_{1\underline{1};1}\leq 2\ . $$
Since $m_{11;2} +m_{11;1} + m_{12;1} + m_{1\underline{2};1} + m_{1\underline{1};1}= 1 + 1 + 1+0+0 =3>2$,
the observed colored partition is eliminated from the list.
\end{example}

\noindent {\bf Conjecture 2.} {\it Let $n=2$ and $k\geq 2$. We conjecture that for every $m\in\mathbb N$ the number of colored partitions
$$
m=\sum_{j_a\in\mathcal C_k}j_af_{j_a}
$$
in three colors satisfying congruence conditions (\ref{E: conjecture 3}) equals  the number of colored partitions
$$
m=\sum_{j_a\in\mathcal D_2}j_af_{j_a}
$$
in four colors satisfying congruence conditions (\ref{E: conjecture 8}) and difference conditions (\ref{E: conjecture 11}) for every admissible pair of cascades in the sequence  (\ref{E: conjecture 7}).
}

\begin{remark}
It is clear how to extend the above conjecture to $C_n\sp{(1)}$ for $n>2$. The product formulas for principally specialized characters of some $L(\Lambda_0)$  and $L(2\Lambda_0)$ are given in \cite{M2} and \cite{M4}.

In the case $n=1$ and $k\geq 1$ the product formulas for principally specialized characters of $L(k\Lambda_0)$ and the corresponding combinatorial identities are given in \cite{MP3}.

In (\ref{E: conjecture 9}) we had several choices for triangles
 \ ${}\sp{r}\!\triangle$ on the left and $\triangle_{r}$ on the right.
For $n=1$ we have only two choices: (i) \ ${}\sp{1}\triangle$ on the left and $\triangle_{1}$ on
the right, and (ii) \ ${}\sp{\underline{1}}\triangle$ on the left and $\triangle_{\underline{1}}$ on
the right:
\begin{equation}\label{E: conjecture MP}
\dots
\begin{array}{cccc}
\cdot &  \\
\cdot & \cdot  \\
\end{array}
\begin{array}{cccc}
\bullet  &    \\
\bullet  & \bullet   \\
\end{array}
\begin{array}{cccc}
\bullet &    \\
\cdot & \cdot  \\
\end{array}
\begin{array}{cccc}
\cdot &  \\
\cdot & \cdot  \\
\end{array}
\dots,
\quad
\dots
\begin{array}{cccc}
\cdot &  \\
\cdot & \cdot  \\
\end{array}
\begin{array}{cccc}
\cdot &  \\
\cdot & \bullet  \\
\end{array}
\begin{array}{cccc}
\bullet  &    \\
\bullet  & \bullet   \\
\end{array}
\begin{array}{cccc}
\cdot &    \\
\cdot & \cdot  \\
\end{array}
\dots,
\end{equation}
Moreover, for $n=1$ cascades are either vertical or horizontal. Altogether this gives four conditions (\ref{E: MP argument 6}).

In the case $n=k=1$ the corresponding identity is equivalent to one of Capparelli's identities (see \cite{A3}, \cite{C}, \cite{MP3}).

In a way analogous to  (\ref{E: conjecture 9}) and  (\ref{E: conjecture MP}), we can also visualize difference conditions
(\ref{E: LW argument 4}) in the Rogers-Ramanujan case---it is just one difference condition (\ref{E: difference condition 1}) for two adjacent points
$$
\dots\quad\cdot\quad\cdot\quad\cdot\quad\bullet\quad\bullet\quad\cdot\quad\cdot\quad\dots\,.
$$
\end{remark}


\end{document}